\newtheorem{lemma}{Lemma}[section]
\newtheorem{proposition}[lemma]{Proposition}
\newtheorem{theorem}[lemma]{Theorem}
\theoremstyle{definition}
\theoremstyle{remark}
\newtheorem{remark}[lemma]{Remark}
\numberwithin{equation}{section}
\newcommand{\tens}{\otimes}
\newcommand{\iso}{\stackrel{\sim}{\to}}
\newcommand{\ra}{\rightarrow}
\newcommand{\xra}{\xrightarrow}
\newcommand{\Ker}{\operatorname{Ker}}
\newcommand{\Pic}{\operatorname{Pic}}
\newcommand{\Lie}{\operatorname{Lie}}
\newcommand{\ch}{\operatorname{char}}
\newcommand{\Br}{\operatorname{Br}}
\newcommand{\Spec}{\operatorname{Spec}}
\newcommand{\Gal}{\operatorname{Gal}}
\newcommand{\SL}{\operatorname{SL}}
\newcommand{\gSL}{\operatorname{\mathbf{SL}}}
\newcommand{\gGL}{\operatorname{\mathbf{GL}}}
\newcommand{\gSpin}{\operatorname{\mathbf{Spin}}}
\newcommand{\SB}{\operatorname{\mathbf{SB}}}
\newcommand{\End}{\operatorname{End}}
\newcommand{\Hom}{\operatorname{Hom}}
\newcommand{\RHom}{\operatorname{RHom}}
\newcommand{\Ext}{\operatorname{Ext}}
\newcommand{\Coh}{\operatorname{Coh}}
\newcommand{\Qcoh}{\operatorname{Qcoh}}
\newcommand{\Sym}{\operatorname{Sym}}
\newcommand{\Skew}{\operatorname{Skew}}
\newcommand{\pdim}{\operatorname{pdim}}
\newcommand{\rdim}{\operatorname{rdim}}
\newcommand{\gldim}{\operatorname{gldim}}
\renewcommand{\P}{\mathbb{P}}
\newcommand{\Z}{\mathbb{Z}}
\newcommand{\C}{\mathbb{C}}
\newcommand{\gm}{\mathbb{G}_m}
\newcommand{\cA}{\mathcal A}
\newcommand{\cC}{\mathcal C}
\newcommand{\cD}{\mathcal D}
\newcommand{\cE}{\mathcal E}
\newcommand{\cF}{\mathcal F}
\newcommand{\cI}{\mathcal I}
\newcommand{\cJ}{\mathcal J}
\newcommand{\cM}{\mathcal M}
\newcommand{\cO}{\mathcal O}
\newcommand{\cT}{\mathcal T}
\newcommand{\cHom}{\mathcal Hom}
\newcommand{\cRHom}{\mathcal RHom}
\newcommand{\Fol}{\overline{F}}
\title[A Derived Equivalence for some Twisted Projective Homogeneous
  Varieties]{A Derived Equivalence for some Twisted Projective Homogeneous
  Varieties} \author{Mark Blunk} \address{Department of Mathematics,
  Univ. of British Columbia, Vancouver, BC V6T 1Z2, Canada}
\email{mblunk@math.ubc.ca}
\subjclass{20G15, 14M17}
\keywords{linear algebraic groups, tilting sheaves, homogeneous varieties.}
\thanks{M. Blunk was supported in part by the National Science
  Foundation, Award No. 0902659}
\begin{document}
\begin{abstract}
In this paper we construct a tilting sheaf for Severi-Brauer Varieties
and Involution Varieties. This sheaf relates the derived category of
each variety to the derived category of modules over a ring whose
semisimple component consists of the Tits algebras of the
corresponding linear algebraic group.
\end{abstract}
\maketitle
 \tableofcontents
\section{Introduction}

The origin of this direction of research is the classic paper
\cite{Bei78} of Beilinson, where it is shown that the derived category
of the projective space $\P_{\C}(V)$ that the sheaves $\cO, \cO(-1),
\dots \cO(-\dim (V)+1)$ form a simple set of generators, (what is
called a \emph{strong exceptional collection}). In \cite{Kap88},
Kapranov generalized these calculations to form strong exceptional
collections for Grassmannians and Quadrics over $\C$. All of these
varieties are \emph{Projective Homogeneous Varieties}. That is, they are of the
form $G/P$, where $G$ is a semisimple linear algebraic group and $P$
is a parabolic subgroup. It is a conjecture of Catanese that every
$G/P$ should possess such a collection, and there have been several
results in this direction (see \cite{Boh06},\cite{Kuz08},
\cite{KuzPol11}, and \cite{FanManPol12}, among others).

In this paper, we will work in another direction. The varieties
considered here are \emph{twisted} projective homogeneous
varieties. These are varieties $X$ defined over a field which, after
extension of scalars to a separable closure, become isomorphic to some
$G/P$. Examples of such varieties are Severi-Brauer Varieties and
Involution Varieties. The group $G$ is of type $A$ for Severi-Brauer
varieties and of type $D$ for Involution Varieties.

Examples of such varieties are Severi-Brauer Varieties and
Involution Varieties. Instead of producing exceptional collections, in
this paper we follow \cite{Bae88} and produce \emph{tilting
  sheaves} (see Section \ref{sec.tilt} for a definition). Our main
result is to produce a locally free sheaf $\cT$ on $X$ which induces a
derived equivalence
\[D^b(\Coh(X)) \ra D^b(\mod-\End (\cT)).\]

This paper is heavily influenced by \cite{Pan94}, where the Quillen
$K$-theory of twisted Projective Homogeneous Varieties is
computed. In \cite{Blu10}, the author produced a locally sheaf $\cF$ on a del
Pezzo surface $S$ of degree 6 which was used to describe the $K$-theory of
$S$. Later, in \cite{BSS11}, it was shown that the same sheaf $\cF$ is
a tilting sheaf for the surface $S$. This paper completes the chain
by showing that the sheaves produced in \cite{Pan94} can be used to
develop derived equivalences for some twisted Projective Homogeneous
Varieties. The arguments in this paper follow a similar line of
reasoning to that found in \cite{BSS11}.

In Section \ref{sec.tilt}, we recall the definition of a tilting
sheaf, our main tool for constructing the desired derived
equivalences. In Section \ref{sec.gldim}, we recall some needed facts
about modules and generation of categories and in \ref{sec.rep}, we
briefly recall some basic properties of semisimple linear algebraic
groups. In particular we state the Borel-Weil-Bott Theorem (Theorem
\ref{thm.BWB}), which we will use repeatedly. In the remainder of the
paper, we construct tilting sheaves for Severi-Brauer Varieties,
Generalized Severi-Brauer Varieties, and Involution Varieties. We
conclude in Section \ref{sec.Ktheory} with an application on computing
the Quillen $K$-theory of these varieties.

\subsection{Notation}
Let us fix some notation. We will let $F$ denote a field, and $\Fol$ a
fixed separable closure of $F$. The group $\Gamma := \Gal(\Fol/F)$ is
the Galois group. By a variety $X$ over $F$ we mean a reduced scheme
of finite type over $\Spec (F)$. The abelian category of quasicoherent
sheaves on $X$ will be denoted $\Qcoh(X)$, and $\Coh (X)$ is the
abelian subcategory of sheaves on $X$. For a ring $R$, $\mod-R$ (resp.
$R-\mod$) is the abelian category of finitely generated right (resp. left)
$R$-modules.

If $\cA$ is an abelian category, then $D(\cA)$ will denote the
corresponding derived category $\cA$ (confer \cite[Chapter
  III]{GelMan03}). This is a triangulated category objects are
complexes with terms in $\cA$, and maps homomorphisms of chain
complexes, modulo homotopy equivalences, and localizing the set of
quasi-isomorphisms. The subcategory of bounded complexes will be
denoted $D^b(\cA)$. If $M \in \cA$, by abuse of notation we will use
the same symbol to denote the complex in $D^b(\cA)$ with $M$
concentrated in degree 0 and every other term equal zero.

\subsection{Acknowledgments}
The author would like to thank Patrick Brosnan, Alexander Merkurjev,
Edward Richmond for many useful discussions.


\section{Tilting Sheaf}
\label{sec.tilt}
We say that a sheaf $\cT$ on a smooth variety $X $ is a \emph{tilting
  sheaf} if the following conditions hold:
\begin{itemize}
\item The sheaf $\cT$ has no self extensions, i.e. $\RHom_{D^b(X)}(\cT[i],\cT)
  = 0$, for $i>0$.
\item The algebra $\End_{\cO_X}(\cT)$ of global endomorphisms of the
sheaf $\cT$ has finite global dimension (see Section \ref{sec.gldim}
for a definition of global dimension of a ring).
\item There is no proper thick subcategory of $D^b(\Coh(X))$
  containing the element $\cT$ (see Section \ref{sec.gen} for a
  definition of thick).
\end{itemize}

Our main tool in this paper is the following theorem.
\begin{theorem}[Theorem 3.12 of \cite{Bae88}]
\label{thm.B}
  Let $X$ be a smooth variety, and let $M \in \Coh(X)$ be a tilting
  sheaf with $S := End_{\cO_X}(M)$.  Then the functors
  \begin{align*}
    F:= \Hom(M, -): \Coh(X) \ra \mod-S\\ 
   G := - \tens M: \mod-S \ra \Coh (X)
  \end{align*}
induce equivalences of categories, 
\begin{align*}
    RF: D^b(\Coh(X)) \ra D^b(\mod-S) \\
   LG: D^b(\mod-S) \ra D^b(\Coh (X)),
\end{align*}
inverse to each other.
  \end{theorem}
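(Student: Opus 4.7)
The plan is to establish that $(LG, RF)$ is an adjoint pair of triangulated functors between the two bounded derived categories, to verify that the unit and counit are isomorphisms on the distinguished objects $M$ and $S$, and finally to propagate these isomorphisms to every object using the generation axiom.

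First I would confirm that each functor actually lands in the bounded derived category. Since $X$ is smooth of finite dimension, every coherent sheaf admits a finite locally free resolution, so $RF = R\Hom(M,-)$ is cohomologically bounded and sends $D^b(\Coh(X))$ into $D^b(\mod-S)$. On the other side, the second tilting axiom forces $S$ to have finite global dimension, so every finitely generated $S$-module admits a finite resolution by finitely generated projective $S$-modules; hence $LG = -\tens^L_S M$ lands in $D^b(\Coh(X))$. The ordinary Hom-tensor adjunction between $-\tens_S M$ and $\Hom(M,-)$ derives to an adjunction $(LG, RF)$ between these bounded categories.

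Second I would evaluate the counit at $M$. The first tilting axiom gives $\Ext^i(M,M) = 0$ for $i>0$, so $RF(M) = R\Hom(M,M)$ is concentrated in degree zero and equal to $S = \End(M)$; then $LG(S) = S \tens^L_S M \cong M$, and the counit $LG \circ RF(M) \to M$ is an isomorphism. By a symmetric computation, the unit $S \to RF \circ LG(S)$ is also an isomorphism.

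The final step upgrades these pointwise facts to natural isomorphisms of functors. Let $\cC \subseteq D^b(\Coh(X))$ be the full subcategory of objects $N$ at which the counit $LG\circ RF(N) \to N$ is an isomorphism. Since both functors are triangulated and commute with direct summands, the five-lemma shows that $\cC$ is a thick subcategory; as $M \in \cC$, the third tilting axiom forces $\cC = D^b(\Coh(X))$. The same reasoning works on the $S$-side once one knows that $S$ itself generates $D^b(\mod-S)$ as a thick subcategory; this follows from the finiteness of the global dimension, because any bounded complex of finitely generated $S$-modules is quasi-isomorphic to a bounded complex of finitely generated projectives, and each such projective is a summand of a finite direct sum of copies of $S$. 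I expect the subtlest points to be this generation statement for $\mod-S$ and the careful verification that the locus where a natural transformation becomes an isomorphism is closed under summands as well as cones and shifts; the remainder is a formal consequence of the derived adjunction together with the three tilting axioms.
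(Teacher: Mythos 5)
The paper does not prove this statement at all: it is imported verbatim as Theorem 3.12 of Baer's paper, so there is no internal argument to compare against. Your proposal is, in essence, the standard proof of the tilting correspondence (the one found in Baer and in Bondal's work on representable functors): derive the Hom--tensor adjunction, check that the unit and counit are isomorphisms on the generators $S$ and $M$ using $\Ext^{i}(M,M)=0$ for $i>0$, and spread this to everything by observing that the locus where a natural transformation of triangulated functors is invertible is thick, invoking the third tilting axiom on the sheaf side and finite global dimension (hence thick generation of $D^b(\mod\text{-}S)$ by $S$) on the module side. That outline is correct, and you correctly locate where each of the three tilting axioms is used.

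One point deserves more care than your sketch gives it: for $RF$ to land in $D^b(\mod\text{-}S)$ you need each $\Ext^{i}_X(M,N)$ to be a \emph{finitely generated} $S$-module, not merely that the $\Ext$'s vanish in high degrees. Smoothness alone does not give this; one needs $X$ proper (in Baer's setting, projective), so that the $\Ext$ groups are finite-dimensional over the base field and hence automatically finite over $S$. The paper's statement says only ``smooth variety,'' but its applications are to projective homogeneous varieties and Baer's hypotheses include projectivity, so your argument goes through in the intended generality once this hypothesis is made explicit. The remaining subtleties you flag yourself (closure of the isomorphism locus under summands, and thick generation of $D^b(\mod\text{-}S)$ by $S$) are handled correctly: the summand statement follows from the matrix argument for a direct sum of maps being invertible, and the generation statement from finite projective resolutions by finitely generated projectives, each a summand of a finite free module.
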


\section{Global Dimension}
\label{sec.gldim}
Let $R$ be a ring. The projective dimension of an arbitrary left
$R$-module $T$ is denoted by $\pdim_R T$. The \emph{global (homological) dimension} of $R$ is
supremum of $\pdim_R T$ over all such modules $T$.

\begin{proposition}[\cite{ARS95}, Prop III.2.7]
\label{prop.glDim}
Let $R$ and $S$ be artinian $F$-algebras , and $M$ an
$R$-$S$-bimodule, finitely generated over $F$.  If $S$ is a semisimple
ring, then

\[\gldim \begin{pmatrix} R & M \\ 0  & S \end{pmatrix} = \max \{ \pdim_R M +1, \gldim R \}.\]
\end{proposition}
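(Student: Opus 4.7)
The plan is to apply the standard fact that for an artinian ring, global dimension equals the supremum of projective dimensions of simple modules, and then to compute this supremum on the simples coming from each of $R$ and $S$ separately. Write $T = \begin{pmatrix} R & M \\ 0 & S \end{pmatrix}$ with diagonal idempotents $e_1, e_2$, so the indecomposable projective left $T$-modules are $Te_1 = \begin{pmatrix} R \\ 0 \end{pmatrix}$ and $Te_2 = \begin{pmatrix} M \\ S \end{pmatrix}$. Since $S$ is semisimple, $T/\operatorname{rad}(T) \cong (R/\operatorname{rad} R)\times S$, so the simple $T$-modules split into the simple $R$-modules $U$, sitting as $\begin{pmatrix} U \\ 0 \end{pmatrix}$, and the simple $S$-modules $V$, sitting as $\begin{pmatrix} 0 \\ V \end{pmatrix}$. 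These two families will contribute the two arguments of the max.

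For the first family, any $R$-projective resolution $P_\bullet \to U$ becomes a $T$-projective resolution $\begin{pmatrix} P_\bullet \\ 0 \end{pmatrix} \to \begin{pmatrix} U \\ 0 \end{pmatrix}$ via the functor $Te_1 \tens_R (-)$, and the identification $\Hom_T(\begin{pmatrix} - \\ 0 \end{pmatrix}, \begin{pmatrix} - \\ 0 \end{pmatrix}) = \Hom_R(-,-)$ transports $\Ext^*_T$ to $\Ext^*_R$ on such modules; together these give $\pdim_T \begin{pmatrix} U \\ 0 \end{pmatrix} = \pdim_R U$, so this family contributes $\gldim R$ to the supremum.

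For the second family, semisimplicity of $S$ realizes $V$ as a direct summand of $S$, so $\begin{pmatrix} M\tens_S V \\ V \end{pmatrix}$ is a summand of $Te_2$ and is therefore $T$-projective, and the short exact sequence
\[
0 \to \begin{pmatrix} M\tens_S V \\ 0 \end{pmatrix} \to \begin{pmatrix} M\tens_S V \\ V \end{pmatrix} \to \begin{pmatrix} 0 \\ V \end{pmatrix} \to 0,
\]
combined with the first-family computation, produces the upper bound $\pdim_T \begin{pmatrix} 0 \\ V \end{pmatrix} \leq 1 + \pdim_R(M\tens_S V)$. For the matching lower bound, feed a test module $\begin{pmatrix} Y \\ 0 \end{pmatrix}$ with $\Ext^n_R(M\tens_S V, Y)\neq 0$ (where $n = \pdim_R(M\tens_S V)$) into the Ext long exact sequence of this short exact sequence; projectivity of the middle term kills its higher Ext, forcing $\Ext^{n+1}_T(\begin{pmatrix} 0 \\ V \end{pmatrix}, \begin{pmatrix} Y \\ 0 \end{pmatrix}) \neq 0$, with the $n=0$ case handled by observing that the projection from the middle term admits no section when $M\tens_S V\neq 0$. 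From $M \cong M\tens_S S$ and the Wedderburn decomposition of $S$, one has $\sup_V \pdim_R(M\tens_S V) = \pdim_R M$, and assembling both families yields the claimed formula.

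The main obstacle is the lower bound in the second family: the obvious exact functor $N\mapsto e_1N$ from $T$-modules to $R$-modules does not carry $Te_2$ to an $R$-projective (it sends it to $M$), so one cannot simply transport a projective resolution through it, and the Ext long exact sequence above is the right substitute.
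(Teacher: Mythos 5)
The paper does not prove this proposition; it is quoted verbatim from \cite{ARS95}, Proposition III.2.7, so there is no in-text proof to compare against. Your argument is correct and is, in substance, the standard proof found in that reference: identify the simples over the triangular algebra via $T/\operatorname{rad}T \cong (R/\operatorname{rad}R)\times S$, observe that the exact embedding $N \mapsto \left(\begin{smallmatrix} N \\ 0 \end{smallmatrix}\right)$ sends $R$-projective resolutions to $T$-projective resolutions and induces isomorphisms on $\Ext$ between first-row modules, and handle the $S$-simples through the short exact sequence with projective middle term $\left(\begin{smallmatrix} M\tens_S V \\ V \end{smallmatrix}\right)$ (a summand of $Te_2$ by semisimplicity of $S$). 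Your attention to the lower bound in the second family is well placed: the dimension shift in the $\Ext$ long exact sequence is only automatic for $n\geq 1$, and your separate treatment of $n=0$ --- showing the surjection onto $\left(\begin{smallmatrix} 0 \\ V \end{smallmatrix}\right)$ admits no $T$-linear section when $M\tens_S V\neq 0$, since a section would force $\mathrm{id}_{M\tens_S V}=0$ --- correctly fills the gap that a less careful reading would leave. The final reduction $\sup_V \pdim_R(M\tens_S V) = \pdim_R M$ via $M\cong M\tens_S S$ and Wedderburn is also correct, and also quietly handles the degenerate case $M=0$.
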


\subsection{Generation and thick subcategories}
\label{sec.gen}

We recall some properties of generation of triangulated categories
(confer \cite{Nee92}, \cite[Section 4]{BSS11}).

Let $\cD$ be a triangulated category, and let $\cE$ denote a subset of
objects of $\cD$. A triangulated category is equipped with a shift
operator. If $M \in \cD$, the shift of $M$ will be denoted by $M[1]$.

\begin{itemize}
\item A subcategory of $\cC \subset \cD$ is said to be \emph{thick}
  (\'{e}paisse) if it is closed under isomorphisms, shift, taking
  cones of morphisms, and taking direct summands of objects in $\cC$.

\item An object $C \in \cD$ is said to be \emph{compact} if
  $\Hom_{\cD}(C, -)$ commutes with direct sums. Let $\cD^c \subset
  \cD$ denote the subcategory of compact objects in $\cD$.

\item We define $\langle \cE \rangle$ to be the smallest thick full
  subcategory of $\cD$ containing the elements of $\cE$.

\item We define $\cE^{\perp}$ to be the subcategory of $\cD$
  consisting of all objects $M$ such that $\Hom_{\cD}(E[i], M) = 0$,
  for all $i \in \Z$ and all $E \in \cE$.
\end{itemize}

We say that $\cE$ \emph{generates} $\cD$ if $\cE^{\perp} = 0$. If
$\cD^c$ generates $\cD$, then we say $\cD$ is \emph{compactly
  generated}.

If $\cD$ is compactly generated and $\cE \subset \cD^c$. It's
clear that if $\langle \cE \rangle = \cD^c$, then $\cE$ generates
$\cD$.  The following theorem tells us that the converse is also true.

\begin{theorem}[Ravenel and Neeman \cite{Nee92}. Also see Thm. 2.1.2 in \cite{BonVdb03}]
\label{thm.RN}
Let $\cD$ be a compactly generated triangulated category.  Then a set
of objects $E \subset \cD^c$ generates $\cD$ if and only if $\langle E
\rangle = \cD^c$.
\end{theorem}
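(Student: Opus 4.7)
The plan is to prove the two implications separately, with the forward implication being essentially formal and the converse relying on Bousfield localization in compactly generated triangulated categories.

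For the easy direction $\langle E \rangle = \cD^c \Rightarrow E$ generates $\cD$, suppose $M \in E^\perp$; the goal is $M = 0$. Consider the full subcategory $\cC_M := \{X \in \cD : \Hom_{\cD}(X[i], M) = 0 \text{ for all } i \in \Z\}$. A routine check shows $\cC_M$ is closed under shifts, cones of morphisms, and direct summands, so $\cC_M \cap \cD^c$ is a thick subcategory of $\cD^c$ containing $E$. By hypothesis $\cC_M \cap \cD^c = \cD^c$, so $M$ is right-orthogonal to every compact object, and since $\cD$ is compactly generated this forces $M = 0$.

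For the converse, assume $E$ generates $\cD$. Let $\cL$ denote the smallest triangulated subcategory of $\cD$ that contains $E$ and is closed under arbitrary small coproducts, i.e., the localizing subcategory generated by $E$. Because $\cD$ is compactly generated and $E$ is a set, Brown representability supplies a Bousfield localization triangle $L(X) \to X \to Q(X) \to L(X)[1]$ with $L(X) \in \cL$ and $Q(X) \in \cL^{\perp}$ for every $X \in \cD$. For each $e \in E$ one has $e \in \cL$, so $\Hom_{\cD}(e[i], Q(X)) = 0$ for all $i$; hence $Q(X) \in E^\perp$, and since $E$ generates $\cD$ we conclude $Q(X) = 0$. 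Therefore $\cL = \cD$, and $\cD$ is compactly generated by the set $E$ of compact objects.

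It remains to identify $\cD^c$ with $\langle E \rangle$. Here I would invoke Neeman's theorem: in a triangulated category compactly generated by a set of compact objects $E$, the subcategory of compact objects coincides with the thick closure $\langle E \rangle$. This is the main obstacle of the proof; its content lies in an induction that realizes each compact object as a summand of an iterated cone (a Milnor/homotopy colimit truncated by compactness) built from shifts of elements of $E$. I would cite this step from Neeman rather than reproduce the argument. Granting it, $\cD^c = \langle E \rangle$ follows immediately.
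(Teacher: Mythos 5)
The paper does not prove this statement at all: Theorem \ref{thm.RN} is imported verbatim from Ravenel--Neeman (with a pointer to Bondal--Van den Bergh), so there is no in-paper argument to compare yours against. Judged on its own, your first implication is correct and complete: the subcategory $\cC_M$ of objects $X$ with $\Hom_{\cD}(X[i],M)=0$ for all $i\in\Z$ is thick, contains $E$, hence contains $\langle E\rangle=\cD^c$, and compact generation then kills $M$.

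The converse, however, has a genuine gap: the statement you invoke at the end --- ``in a triangulated category compactly generated by a set of compact objects $E$, the compacts coincide with $\langle E\rangle$'' --- is not an auxiliary lemma but is precisely the implication you are supposed to prove. Indeed, with the paper's definitions, the hypothesis ``$E\subset\cD^c$ generates $\cD$'' already says that $\cD$ is compactly generated by $E$, so your citation is the theorem itself and the argument is circular as a self-contained proof. The Bousfield localization step is correct but does not advance matters: it only re-derives the standard fact that the localizing subcategory generated by $E$ is all of $\cD$, which is neither needed to apply the quoted result nor sufficient to identify $\cD^c$ with $\langle E\rangle$. The missing content is exactly the Thomason--Neeman argument you allude to but skip: write a given object as a homotopy colimit of a telescope $X_0\to X_1\to\cdots$ whose stages are iterated extensions of coproducts of shifts of objects of $E$; for a compact $C$, use compactness once to factor $\id_C$ through a finite stage $X_m$ and again to replace the infinite coproducts occurring there by finite ones, exhibiting $C$ as a direct summand of an object of $\langle E\rangle$ and hence, by thickness, as an object of $\langle E\rangle$. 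If quoting Neeman wholesale is allowed, your write-up amounts to the same move the paper makes (state and cite); as a proof, it leaves the entire substance of the hard direction unproved.
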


\section{Groups, Quotients, and Associated Sheaves}
\label{sec.rep}
We briefly summarize some definitions and properties of linear
algebraic groups, which we will need in the paper. Some references for
this section are \cite{Bou02}, \cite[I.5]{Jan03}, \cite{Spr98}, and
\cite[Chapter 24]{KMRT98}.

Let $G$ denote a split, semisimple linear algebraic group over $F$,
and $T \subset G$ a fixed maximal torus. The group $\Lie(T)^*: = \Hom
(T, \gm)$ will denote the \emph{character lattice} of $G$. It is a
free $\Z$-module of finite rank. The set $R \subset \Lie(T)^*$ will denote a
root system corresponding to $T$, $R^+$ the set of positive roots,
and $R^-:=-R^+$ the set of negative roots, and the set $S = \{\alpha_1, \dots,
\alpha_r\} \subset R^+$ is a basis of simple roots.

For $\alpha \in \Lie(T)^*$, let $\Lie(G)^\alpha$ denote the eigenspace
of $\Lie(G)$ corresponding to $\alpha$.  The Lie subalgebra
\[\Lie(T) \bigoplus_{\alpha \in R^+} \Lie(G)^\alpha\]
is $\Lie(B)$ for a fixed Borel subgroup $B$ of $G$ determined by $T$ and
$S$. If $I \subset S$, then the Lie subalgebra
\[\Lie(B) \oplus  \bigoplus_{\alpha \in R^-(I)} \Lie(G)^\alpha,\]
where \[R^-(I):= \{\alpha \in R^- |\ \ \alpha = \Sigma_{i=1}^r
a_i\alpha_i\ \mathrm{with}\ a_i \leq 0,\ \ a_i = 0\ \forall\ \alpha_i \in
I \},\] is $\Lie(P_I)$, for a unique parabolic subgroup $P_I \supset B$
of $G$. Every such parabolic subgroup (intermediate between $B$ and
$G$) arises in this fashion. For example, $P_\emptyset = G$ and $P_S = B$.

Let $\{\lambda_1,\dots, \lambda_r\}$ be the set of \emph{fundamental
  weights} determined by the simple roots $\alpha_i$, and $\Lambda = :\Z[\lambda_i]$ be the \emph{weight
  lattice}. If $P_I$ is a parabolic subgroup of $G$, then we say a
weight $\lambda$ is \emph{dominant for $P_I$} if \[\lambda = \Sigma_{i
  \in I} n_i \lambda_i + \Sigma_{j \notin I} n_j
\lambda_j,\ \ \mathrm{where} \ n_j \geq 0.\] Also, let $\rho =
\Sigma_i \lambda_i$.

The Weyl group $W$ is the group generated by the simple reflections
$s_\alpha$ corresponding to $\alpha \in S$. For $w \in W$, the
\emph{length} of $w$ is the least number of factors in a decomposition
of $w$ as a product of simple reflections. There is an an action of
$W$ on the weight lattice $\Lambda$. We will need another action,
called the \emph{dot} or \emph{affine} action on $\Lambda$:
\[w.\lambda :=w(\lambda +\rho) -\rho.\] We say that a weight $\lambda$
is \emph{singular} if there is some non-trivial $w \in W$ such that
$w.\lambda = \lambda$.

If $P \subset G$ is a parabolic subgroup, there exists a
decomposition, called the Levi decomposition, of $P$ into a semisimple
or Levi factor $L_P$ and a unipotent subgroup $R_u(P)$. If $\phi: P \ra
\gGL(V)$ is an irreducible finite-dimensional representation, $R_u(P)$
acts trivially, and so $\phi$ descends to a representation of the Levi
factor $L_{P}$. Each such representation posses a unique highest weight
$\lambda \in \Lambda$ which determines the representation. Moreover
this weight is dominant for $P$. For a weight $\lambda$, we will
denote the corresponding representation vector space by $V(\lambda)$.

Finally, if $\phi: P \ra \gGL(V)$ is as in previous paragraph, with corresponding
weight $\lambda$, then we define a locally free sheaf of rank
$\dim(V)$ on the projective homogeneous variety $G/P$ as follows:
\[G \times_r V := G \times V / \{(g, v) \sim (gp^{-1}, \phi(p)(v))\ |
\ p \in P,\ g\in G,\ v \in V \}.\] The projection $G \times V \ra
G$ induces a map $G \times_r V \ra G/P$, defining a vector bundle over
$G/P$. The corresponding locally free sheaf on $G/P$ will be denoted
$\cO_{G/P}(\lambda)$.  A section of this sheaf can be thought of as a
function $F:G \ra V$ satisfying $F(gp) = \phi(p^{-1})F(g)$.

Let us recall the celebrated Borel-Weil-Bott theorem, which relates
representations on $P$ to the cohomology of the induced sheaf on the
variety $G/P$. 
\begin{theorem}[Theorem 5.0.1 of \cite{BasEas89}]    
\label{thm.BWB}
Let $G$ be a simply connected split semisimple algebraic group,
$P \subset G$ be a parabolic subgroup, and $\lambda \in \Lambda$ is
dominant with respect to $P$. Consider the corresponding sheaf
$\cO_{G/P}(\lambda)$ on $G/P$. Then:
\begin{itemize}
\item[a] If $\lambda$ is singular, \[H^r(G/P, \cO_{G/P}(\lambda)) =
  0,\ \ \forall r.\]
  
\item[b] If $\lambda$ is not singular, then there exists a unique $w
  \in W$ such that $w.\lambda$ is dominant. Moreover,
\begin{align*} H^i(G/P, \cO_{G/P}(\lambda)) &= 0, \  i \neq l(w)\\
                      H^{l(w)}(G/P, \cO_{G/P}(\lambda)) &= V(w.\lambda). 
\end{align*}
\end{itemize}
\end{theorem}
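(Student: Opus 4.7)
The plan is to reduce the Borel-Weil-Bott theorem to two special cases and then assemble them. First I would reduce from a general parabolic $P$ to the Borel subgroup $B \subset P$. Consider the projection $\pi: G/B \to G/P$; its fibers are isomorphic to $P/B$, which is itself the complete flag variety of the Levi factor $L_P$. Because $\lambda$ is dominant with respect to $P$, the restriction of $V(\lambda)$ to $L_P$ is an irreducible highest-weight representation, and Borel-Weil applied to $P/B$ shows that $\pi_*\cO_{G/B}(\lambda)$ recovers $\cO_{G/P}(\lambda)$ while the higher direct images vanish. The Leray spectral sequence then collapses to an isomorphism
\[H^i(G/P, \cO_{G/P}(\lambda)) \cong H^i(G/B, \cO_{G/B}(\lambda)),\]
so it suffices to treat the case $P = B$.

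On $G/B$ I would proceed by induction on the length of the Weyl group element needed to carry $\lambda$ into the dominant chamber under the dot action. The base case is the classical Borel-Weil theorem: when $\lambda$ is itself dominant, sections of $\cO_{G/B}(\lambda)$ identify with algebraic maps $F: G \to V(\lambda)$ satisfying $F(gb) = \phi(b^{-1})F(g)$, and a direct analysis (via the open Bruhat cell, or via Peter-Weyl) yields $H^0(G/B, \cO(\lambda)) = V(\lambda)$ with vanishing higher cohomology. For the inductive step, I would exploit, for each simple root $\alpha \in S$, the minimal parabolic $P_\alpha$ and the $\P^1$-bundle $\pi_\alpha: G/B \to G/P_\alpha$. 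Given a non-dominant $\lambda$, choose $\alpha$ with $\langle \lambda + \rho, \alpha^\vee \rangle \leq 0$ and compute $R^j\pi_{\alpha,*}\cO(\lambda)$ fibrewise on $\P^1$. If $\langle \lambda + \rho, \alpha^\vee \rangle = 0$, all pushforwards vanish, giving part (a); if $\langle \lambda + \rho, \alpha^\vee \rangle < 0$, then $\pi_{\alpha,*}\cO(\lambda) = 0$ while $R^1\pi_{\alpha,*}\cO(\lambda) \cong \cO_{G/P_\alpha}(s_\alpha.\lambda)$, and the Leray spectral sequence degenerates into a degree-shifting isomorphism
\[H^i(G/B, \cO(\lambda)) \cong H^{i-1}(G/B, \cO(s_\alpha.\lambda)).\]
Iterating through a reduced expression for $w$ eventually moves $\lambda$ to the dominant weight $w.\lambda$ and accumulates exactly $l(w)$ degree shifts, producing part (b). Uniqueness of $w$ comes from the fact that when $\lambda$ is not singular the dot action of $W$ sends $\lambda + \rho$ freely through chambers, so exactly one $w$ lands in the dominant chamber; when $\lambda$ is singular, $\lambda + \rho$ lies on a wall and the iteration must at some stage encounter a simple root $\alpha$ with $\langle \lambda + \rho, \alpha^\vee \rangle = 0$, whence part (a) applies.

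The main obstacle will be the $\P^1$-bundle computation in the inductive step: one has to identify $R^j\pi_{\alpha,*}\cO(\lambda)$ precisely with the line bundle on $G/P_\alpha$ labelled by the shifted weight $s_\alpha.\lambda$. This requires comparing the $L_{P_\alpha}$-representation structure along the fibres with the standard $\SL_2$-theory, carefully tracking the $\rho$-shift so that the Serre duality computation $H^1(\P^1, \cO(n)) \cong H^0(\P^1, \cO(-n-2))^*$ matches the simple reflection $s_\alpha.\lambda = \lambda - (\langle \lambda, \alpha^\vee\rangle + 1)\alpha$. Once this bookkeeping is correct, the singular-versus-regular dichotomy is exactly the statement that $s_\alpha.\lambda = \lambda$ iff $\langle \lambda + \rho, \alpha^\vee \rangle = 0$, and the induction closes up.
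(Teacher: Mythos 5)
The paper does not prove this theorem; it is imported wholesale from Baston--Eastwood (Theorem 5.0.1 of \cite{BasEas89}) as a black-box input. So there is no ``paper's proof'' to compare against. That said, your sketch is a faithful outline of the classical Demazure-style argument: reduce from $G/P$ to $G/B$ via the fibration with fibre $P/B$ (the flag variety of the Levi), establish Borel--Weil as the base case, and then run an induction along a reduced expression for $w$, using the $\P^1$-bundles $G/B \to G/P_\alpha$ and the Leray spectral sequence to produce the degree-shifting isomorphism $H^i(G/B,\cO(\lambda)) \cong H^{i-1}(G/B,\cO(s_\alpha.\lambda))$. You correctly isolate the delicate step, namely identifying $R^1\pi_{\alpha,*}\cO(\lambda)$ with the associated bundle $\cO_{G/P_\alpha}(s_\alpha.\lambda)$ (a vector bundle of rank $-\langle\lambda,\alpha^\vee\rangle-1$, not a line bundle) by matching the $\SL_2$-module structure of $H^1(\P^1,\cO(n))$ with the irreducible $L_{P_\alpha}$-representation of highest weight $s_\alpha.\lambda$, and you correctly diagnose the singular case via encountering a wall pairing $\langle\lambda+\rho,\alpha^\vee\rangle = 0$ along the way. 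Two small cautions: (i) the ``Peter--Weyl'' route to the base case is only literally available over $\C$; over a general field one should use the Bruhat-cell or Frobenius-reciprocity argument you also mention; (ii) the inductive step, as you set it up, genuinely requires characteristic zero, since the identification of $H^1(\P^1,\cO(n))$ as the irreducible $\SL_2$-module of highest weight $-n-2$ fails in characteristic $p$ once $-n-2 \geq p$. The paper's citation of \cite{BasEas89} implicitly carries the same restriction, so this is not a defect in your sketch relative to what the paper needs, but it is worth being aware that the argument is not characteristic-free.
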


\section{Severi-Brauer Varieties}
\label{sec.SB}
We recall some well known facts about central simple algebras (confer
\cite{Art82} and \cite[Section I.1.B]{KMRT98}). Let $A$ be a \emph{central
simple algebra} over $F$ of degree $n$. The algebra $A$ has dimension
over $F$ equal to $n^2$, has no non-trivial two sided ideals, and center
equal to $F$. Equivalently, $A\tens_F \Fol$ is isomorphic to
$\End_{\Fol}(V')$, for some $\Fol$-vector space $V'$ of dimension
$n$. The algebraic group $\SL_1(A)$ is of type $A_n$. Every finitely
generated right $A$-module has dimension, as an $F$-vector space,
divisible by $n$, and we define the \emph{reduced dimension} of $M$ by
\[\rdim_A (M) := \frac{\dim_F(M)}{n}.\] Finally, we say that the algebra $A$ is
\emph{split} if $A = \End_F(V)$, where $V$ is a finite dimensional
$F$-vector space.

Let $X := \SB(A)$ be the \emph{Severi-Brauer} variety of the algebra
$A$. This is an irreducible, smooth, projective variety of dimension
$n-1$, whose points consist of right ideals of $A$ which have reduced
dimension 1. If $E/F$ is a field extension, $\SB(A)_E = \SB(A\tens_F
E)$. When $A = \End_F (V)$ for some $F$-vector space $V$ of dimension
$n$, every right ideal of $A$ of reduced dimension 1 is determined by
a unique 1-dimensional subspace of $V$. Thus $\SB(\End_F(V)) =
\P(V)$. In particular, Since $P(V) = G/P$ for $G = \gSL (V)$ and
$P=P_{\alpha_1}$ the stabilizer of a line in $V$, we see that $\SB(A)$
is a twisted form of a projective homogeneous variety.

We define the `tautological' sheaf $\cI$ on $X$ (confer \cite{Art82},
\cite[section 10.2]{Pan94}), a subsheaf of the consist sheaf $A$. The
fiber over a closed point $x \in X(\Fol)$ consists of the elements $a
\in A_{\Fol}$ such that $a \in x \subset A_{\Fol}$ (here $x$ is a
right ideal of $A_{\Fol}$ of reduced dimension 1). This defines a
locally free sheaf of rank $n$ on $X$.  This sheaf $\cI$ is locally
free, and has an induced right $A$ action, and the algebra $\End(\cI)$
is isomorphic to the algebra $A$. Finally, If $A = \End_F(V)$ is
split, then $\cI = \cO_{\P(V)}(-\lambda_1) \tens_F V^*$.

Finally, let
\[\cT = \cO_X \oplus \cI^1 \oplus \cI^2 \oplus \dots \oplus \cI^{\tens (n-1)}.\]  
We will show that $\cT$ is a tilting sheaf for $X$.

\begin{theorem}
The sheaf $\cT$ has no self extensions,
i.e., \[\RHom_{D^b(\Coh(X))}(\cT[i],\cT) = 0,\] for $i>0$.
\label{thm.SBext}
\end{theorem}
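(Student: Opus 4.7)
The plan is to reduce the statement to an explicit cohomology computation on ordinary projective space by extending scalars to the separable closure $\Fol$. Since each summand $\cI^{\otimes k}$ of $\cT$ is locally free, $\cT$ itself is locally free and
\[\RHom_{D^b(\Coh(X))}(\cT[i], \cT) \cong H^i(X, \cT^\vee \otimes \cT)\]
for each $i \geq 0$. Cohomology of coherent sheaves commutes with flat base change, so it is enough to establish the vanishing after extending scalars from $F$ to $\Fol$.

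Over $\Fol$, the algebra $A_{\Fol} \cong \End_{\Fol}(V)$ is split, so $X_{\Fol} \cong \P(V) \cong \P^{n-1}$ and, as noted in the paragraph preceding the theorem, $\cI_{\Fol} \cong \cO_{\P(V)}(-\lambda_1) \otimes_{\Fol} V^*$. Consequently each $(\cI^{\otimes k})_{\Fol}$ is a direct sum of copies of the line bundle $\cO(-k\lambda_1)$, tensored by a finite-dimensional vector space built from tensor powers of $V^*$. The computation of self-extensions of $\cT_{\Fol}$ therefore reduces to verifying that $H^i(\P^{n-1}, \cO(m\lambda_1)) = 0$ for $i > 0$ and $-(n-1) \leq m \leq n-1$.

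To finish I would apply the Borel-Weil-Bott theorem (Theorem \ref{thm.BWB}) with $G = \gSL(V)$ and $P = P_{\alpha_1}$. For $m \geq 0$, the weight $m\lambda_1$ is already dominant for $P$, so part (b) with $w = e$ gives the vanishing. For $-(n-1) \leq m \leq -1$, writing $m\lambda_1 + \rho$ in the standard basis of the character lattice of $\gSL_n$ produces a vector in which two entries coincide; the transposition swapping them fixes $m\lambda_1 + \rho$, so $m\lambda_1$ is singular in the sense of Section \ref{sec.rep}, and part (a) of Theorem \ref{thm.BWB} yields the vanishing.

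The main (and essentially only) obstacle is the brief combinatorial singularity check for $-(n-1) \leq m \leq -1$. Alternatively, one may bypass Borel-Weil-Bott entirely and invoke the classical vanishing $H^i(\P^{n-1}, \cO(m)) = 0$ for all $i > 0$ and $m > -n$, which covers the required range in a single line.
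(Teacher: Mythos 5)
Your proposal is correct and follows essentially the same route as the paper: extend scalars to $\Fol$, decompose $\cT_{\Fol}$ into copies of the line bundles $\cO_{\P(V)}(-j\lambda_1)$, reduce to the vanishing of $H^i(\P(V),\cO(m\lambda_1))$ for $i>0$ and $|m|\leq n-1$, and conclude by Theorem \ref{thm.BWB} since each such weight is dominant or singular. Your explicit check that $m\lambda_1$ is singular for $-(n-1)\leq m\leq -1$ (and the classical alternative $H^i(\P^{n-1},\cO(m))=0$ for $m>-n$) simply fills in the detail the paper states without verification.
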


\begin{proof}
It suffices to extend scalars to $\Fol$, so we may assume that $F$ is
separably closed. In this case, $X = \P(V)$, and the sheaf $\cT$
decomposes into a sum of invertible sheaves of the form
$\cO_{\P(V)}(-j\lambda_1)$, where $j = 0,1,\dots, n-1$.

So it suffices to show that 
\[\RHom_{D^b(\Coh(\P(V)))} (\cO_{\P(V)}(-j_1\lambda_1)[i],
\cO_{\P(V)}(-j_2\lambda_1)) = 0,\] for $i >0$ and $j_1, j_2 = 0, \dots n-1$.
But
\begin{align*}
\RHom_{D^b(\Coh(\P(V)))} (\cO_{\P(V)}(-j_1\lambda_1)[i],
\cO_{\P(V)}(-j_2\lambda_1)) = \\
\Ext^i_{\P(V)}(\cO_{\P(V)}(-j_1\lambda_1),\cO_{\P(V)}(-j_2\lambda_1)) =\\
 H^i(\P(V),\cO_{\P(V)}((j_1-j_2)\lambda_1)) = 0,
\end{align*}
for $-n < j_1 - j_2 \leq n$ and $i > 0$. This follows from Theorem \ref{thm.BWB},
since the corresponding weight in each case is either dominant or singular.
\end{proof}

\begin{remark}
\label{rem.SBut}
Also, note that \[\Hom_{\P(V)}(\cO_{\P(V)}, \cO_{\P(V)}(-i\lambda_1)) =
H^0(\P(V), \cO_{\P(V)}(-i\lambda_1)) = 0,\] since the weight
$-i\lambda_1$ is singular when $1\leq i \leq n-1$. We will need this in
the proof of Theorem \ref{thm.SBgldim}.
\end{remark}

\begin{theorem}
The algebra $\End(\cT)$ has finite global dimension.
\label{thm.SBgldim}
\end{theorem}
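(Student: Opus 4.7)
The plan is to realize $R := \End_{\cO_X}(\cT)$ as the last step of an iterated triangular-matrix extension of finite-dimensional $F$-algebras and to invoke Proposition~\ref{prop.glDim} at each step. For $0 \leq k \leq n-1$, set $\cT_k := \cO_X \oplus \cI \oplus \cdots \oplus \cI^{\tens k}$ and $R_k := \End_{\cO_X}(\cT_k)$, so that $R_0 = F$ (with $\gldim R_0 = 0$) and $R_{n-1} = \End(\cT)$; each $R_k$ is finite-dimensional, and in particular artinian, over $F$.

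First I would establish the vanishing
\[
\Hom_{\cO_X}(\cI^{\tens j},\cI^{\tens i}) = 0 \qquad \text{whenever } 0 \leq j < i \leq n-1.
\]
Exactly as in Theorem~\ref{thm.SBext} and Remark~\ref{rem.SBut}, this reduces to the split case, where $\cI^{\tens r}$ becomes $\cO_{\P(V)}(-r\lambda_1) \tens_{\Fol} (V^*)^{\tens r}$, so the Hom group becomes $H^0(\P(V),\cO_{\P(V)}(-(i-j)\lambda_1))$ tensored with a finite-dimensional $\Fol$-vector space. The weight $-(i-j)\lambda_1$ is singular for $1 \leq i-j \leq n-1$, and Borel--Weil--Bott (Theorem~\ref{thm.BWB}) kills the $H^0$. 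Decomposing an endomorphism of $\cT_k = \cT_{k-1} \oplus \cI^{\tens k}$ as a $2 \times 2$ matrix, the component $\cT_{k-1} \to \cI^{\tens k}$ therefore vanishes, giving a block-triangular identification
\[
R_k \;\cong\; \begin{pmatrix} R_{k-1} & M_k \\ 0 & S_k \end{pmatrix},
\]
where $S_k := \End_{\cO_X}(\cI^{\tens k})$ and $M_k := \Hom_{\cO_X}(\cI^{\tens k},\cT_{k-1})$ is naturally an $R_{k-1}$-$S_k$-bimodule, finite-dimensional over $F$.

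To invoke Proposition~\ref{prop.glDim} I still need $S_k$ to be semisimple. Over $\Fol$, the sheaf $\cI^{\tens k}$ is a direct sum of $n^k$ copies of the line bundle $\cO_{\P(V)}(-k\lambda_1)$, so $S_k \tens_F \Fol$ is a matrix algebra over $\Fol$; since $\Fol/F$ is separable, this forces $S_k$ itself to be semisimple. Proposition~\ref{prop.glDim} then gives
\[
\gldim R_k \;=\; \max\{\pdim_{R_{k-1}} M_k + 1,\ \gldim R_{k-1}\}.
\]
By induction $\gldim R_{k-1}$ is finite, and since $R_{k-1}$ is a finite-dimensional $F$-algebra and $M_k$ is finitely generated, $\pdim_{R_{k-1}} M_k \leq \gldim R_{k-1} < \infty$. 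Hence $\gldim R_k < \infty$, and taking $k = n-1$ completes the argument.

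The only delicate point is the semisimplicity of $S_k$ over the possibly non-closed field $F$; once that is granted, the rest is a mechanical iteration of Proposition~\ref{prop.glDim} driven by the block-triangular decomposition of $R_k$.
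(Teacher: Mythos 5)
Your proof is correct and follows essentially the same route as the paper: an iterated triangular-matrix decomposition of $\End(\cT)$ together with Proposition~\ref{prop.glDim}, using Borel--Weil--Bott to kill the off-diagonal Hom groups. Your induction on the number of summands $k$ is in fact a cleaner formalization of what the paper loosely phrases as ``induction on $n$,'' and your direct semisimplicity check for $S_k$ via base change replaces the paper's identification $\End(\cI^{\tens k}) \cong A^{\tens k}$, but the substance is identical.
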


\begin{proof}
We prove by induction on $n$. The base case $n=1$ is trivial, as the
field $F = \End (\cO_{pt})$ has global dimension 0.

Because $\cT = \oplus_{i=0}^{n-1} \cI^{\tens i}$, we have the
following matrix presentation for $\End(\cT)$ (by Remark
\ref{rem.SBut}, this matrix is lower triangular).

\[\End(\cT) = \begin{pmatrix} 
F & \Hom(\cI,\cO_X) & \Hom(\cI^{\tens
  2}, \cO_X) &   \cdots &  * \\
0 & A & * & \cdots & * \\
0 & 0 & A^{\tens 2} & \cdots & * \\
\vdots & \vdots & \vdots & \ddots & \vdots \\
0 & 0 & 0 & \cdots & A^{\tens n-1})
\end{pmatrix}.\]

So we can write 
\begin{align*}
\End_X(\cT) & =\begin{pmatrix} R & B \\ 0 &
  (A^{\tens n-1})
\end{pmatrix},
\ \ \mathrm{where} \\
R &= \begin{pmatrix} 
F & * & * & \cdots & * \\
0 & A & * & \cdots & * \\
0 & 0 & A^{\tens 2} & \cdots & * \\
\vdots & \vdots & \vdots & \vdots & \vdots \\
0 & 0 & 0 & \cdots & A^{\tens n-2}
\end{pmatrix},\\
B & = \begin{pmatrix} 
\Hom (\cO_X, \cI^{\tens n-1}) \\
\vdots \\
\Hom (\cI^{\tens n-2}, \cI^{\tens n-1}) 
\end{pmatrix}.
\end{align*}

By induction, the ring $R$ has finite global dimension, and in
particular, $\pdim_R(B)$ is finite. Since $A^\tens{(n-1)}$ is
semisimple, we have by Proposition \ref{prop.glDim} that $\gldim
(\End_X(\cT)) = \max \{\pdim_R(B)+1, \gldim (R)\}$, and we conclude
that $\End_X(\cT)$ has finite global dimension.
\end{proof}

\begin{lemma}
\label{lem.SBsplitgen} Assume that the algebra $A$ is split. The 
sheaf $\cT$ generates $D(\Qcoh(\SB(A))$, and $\langle
\cT \rangle = D^b(\Coh (\SB(A)))$.
\end{lemma}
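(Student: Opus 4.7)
The plan is to reduce to Beilinson's original computation for projective space and then apply Theorem \ref{thm.RN}. Since $A$ is split we may identify $X = \SB(A) = \P(V)$ for an $n$-dimensional $F$-vector space $V$, and from the description recalled just before the statement, $\cI = \cO_{\P(V)}(-\lambda_1) \otimes_F V^*$. As a sheaf of $\cO_X$-modules (forgetting the right $A$-structure), this says $\cI^{\otimes j}$ is isomorphic to a direct sum of $n^j$ copies of $\cO_{\P(V)}(-j\lambda_1)$. Consequently each of the line bundles
\[\cO_{\P(V)},\ \cO_{\P(V)}(-\lambda_1),\ \dots,\ \cO_{\P(V)}(-(n-1)\lambda_1)\]
appears as a direct summand of a summand of $\cT$.

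For the second assertion, I would invoke Beilinson's theorem \cite{Bei78}, which states that the line bundles above form a full strong exceptional collection on $\P^{n-1} = \P(V)$; in particular the thick subcategory they generate is all of $D^b(\Coh(\P(V)))$. Since $\langle \cT \rangle$ is a thick subcategory containing $\cT$, and thick subcategories are closed under direct summands, it contains each $\cO_{\P(V)}(-j\lambda_1)$ for $0 \le j \le n-1$. Hence $\langle \cT \rangle \supseteq \langle \cO_{\P(V)}, \dots, \cO_{\P(V)}(-(n-1)\lambda_1)\rangle = D^b(\Coh(\P(V)))$, and the reverse inclusion is automatic.

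For the first assertion I would appeal to Theorem \ref{thm.RN}. Two ingredients are needed: that $D(\Qcoh(X))$ is compactly generated, and that its subcategory of compact objects coincides with $D^b(\Coh(X))$. Both are standard for a smooth projective variety: $D(\Qcoh(X))$ of a quasi-compact separated scheme is compactly generated, and the compact objects are precisely the perfect complexes, which on a smooth scheme agree with $D^b(\Coh(X))$ because every coherent sheaf admits a finite locally free resolution. Given these, the equality $\langle \cT \rangle = D^b(\Coh(X)) = D(\Qcoh(X))^c$ proved in the previous paragraph immediately implies via Theorem \ref{thm.RN} that $\cT$ generates $D(\Qcoh(X))$.

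There is no serious obstacle here, as the decisive input is Beilinson's theorem applied to the split model; the only point that requires care is the identification of the compact objects of $D(\Qcoh(X))$ with $D^b(\Coh(X))$, which is where smoothness (guaranteed since Severi--Brauer varieties are smooth) enters.
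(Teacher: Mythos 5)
Your argument is correct and follows essentially the same route as the paper: identify $\cI^{\otimes j}$ with copies of $\cO_{\P(V)}(-j\lambda_1)$ in the split case, invoke Beilinson's full strong exceptional collection to get $\langle \cT \rangle = D^b(\Coh(\P(V)))$, and conclude generation of $D(\Qcoh(X))$ via the compact-generation formalism of Section \ref{sec.gen}. Your only addition is to spell out explicitly that the compact objects of $D(\Qcoh(X))$ are $D^b(\Coh(X))$ by smoothness, a point the paper leaves implicit.
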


\begin{proof}
Since $A = \End_F(V)$ is split, $\SB(A) = \P(V)$, and $\cI =
\cO_{\P(V)}(-\lambda_1) \tens V^*$. The sheaf $\cO_{\P(V)}(-i\lambda_1)$
is a summand of $\cT$, for $i = 0, \cdots = n-1$. By \cite{Bei78} (or
\cite[Section 3]{Kap88}), we know that these invertible sheaves form a
strong exceptional collection on $\P(V)$. In particular,
\[\langle \big \{\cO_{\P(V)}(-i \lambda_1)\ \ 0\leq i\leq n-1 \big\}\rangle = D^b (\Coh (\P(V))).\]
This forces $\langle \cT \rangle = D^b(\Coh(\P(V)))$, and hence
the sheaf $\cT$ generates $D(\Qcoh(\P(V)))$.
\end{proof}
\begin{proposition}
The sheaf $\cT$ generates $D (\Qcoh(\SB(A)))$.
\label{prop.SBgen}
\end{proposition}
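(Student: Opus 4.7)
The plan is to reduce to the split case handled in Lemma \ref{lem.SBsplitgen} by faithfully flat base change. Choose a finite Galois extension $E/F$ that splits $A$, and let $\pi: X_E \to X$ denote the induced morphism on Severi-Brauer varieties; then $\pi$ is finite étale, hence faithfully flat, and by the functoriality of the tautological construction one has $\pi^*\cT = \cT_E$, where $\cT_E$ is the analogous sheaf on $X_E = \SB(A_E)$. By definition, it suffices to prove that $\cT^\perp = 0$ in $D(\Qcoh(X))$. Given $M \in \cT^\perp$, I would show that $\pi^* M$ lies in $\cT_E^\perp$, invoke Lemma \ref{lem.SBsplitgen} for the split algebra $A_E$ to conclude $\pi^* M = 0$, and finally deduce $M = 0$ from the faithful flatness of $\pi$.

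For the key vanishing, the idea is flat base change for $\RHom$ against a perfect complex. Since $\cT$ is locally free of finite rank, formation of the derived local Hom $R\cHom_X(\cT,-)$ commutes with the flat pullback $\pi^*$; combined with flat base change along the structure morphism of $X$, this yields an isomorphism
\[
\RHom_{X_E}\bigl(\cT_E[i],\, \pi^*M\bigr) \;\cong\; \RHom_X\bigl(\cT[i],\, M\bigr) \otimes_F E
\]
for every $i \in \Z$. The right-hand side vanishes because $M \in \cT^\perp$, so $\pi^*M$ lies in $\cT_E^\perp$ as required.

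The main technical point is justifying this base change identity in the unbounded derived category of quasicoherent sheaves, but since $\cT$ is perfect and $\pi$ is finite flat, it is a standard consequence of the projection formula together with the compatibility of local Hom with flat pullback for a perfect first argument. The remaining steps -- the faithful flatness of $\pi$ and the application of Lemma \ref{lem.SBsplitgen} over $E$ -- are then immediate.
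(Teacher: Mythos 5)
Your argument is correct and follows essentially the same strategy as the paper: reduce to the split case of Lemma \ref{lem.SBsplitgen} by faithfully flat base change to a splitting field, using that $\cT$ is locally free (perfect) so that $\RHom(\cT,-)$ commutes with the flat pullback, and then conclude $M=0$ from faithful flatness. The only (harmless) difference is that you base change to a finite Galois splitting field $E$ rather than to the separable closure $\Fol$ as the paper does, which if anything makes the base-change isomorphism $\RHom_{X_E}(\cT_E,\pi^*M)\cong \RHom_X(\cT,M)\otimes_F E$ easier to justify, since $\pi$ is then finite flat.
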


\begin{proof}
Let $\cM \in D(\Qcoh (\SB(A)))$ and assume that
\[\RHom_{D(\Qcoh(SB(A)))}(\cT, \cM) = 0.\] Since $\cT$ is locally free,
$\cHom(\cT, -)$ and $T^* \tens -$ are exact functors on
$\Qcoh(X)$. (Similarly for $\cHom(\overline{\cT}, -)$ and
$\overline{T}^* \tens -$ on $\Qcoh(\overline{X})$.) Thus, for example,
$\cRHom_X(\cT, \cM)$ can be computed on $D(\Qcoh(X))$ by applying
$\cHom(\cT, -)$ to each individual term in $\cM$.

Consider the following cartesian square:
$$\xymatrix{
\SB(A)_{\Fol} \ar[rr]_{v} \ar[d]_{q} && \SB (A) \ar[d]_{q}
\\
\Spec (\Fol) \ar[rr]_{u} && \Spec (F)
}.$$
Since $u$ (and thus $v$) is flat, it follows that the natural map
\[u^*Rp_* \ra Rq_*v^*\] is an isomorphism of functors (see \cite[(3.18)]{Huy06}).

\begin{align*}
0 & = u^*(\RHom_X(\cT,\cM)) \\
  & = u^*Rp_* \cRHom_X(\cT,\cM)  \quad \hbox{By \cite{Huy06}, page 85} \\
 & = Rq_*v* \cRHom_X(\cT,\cM)  \\
 & = Rq_*v* (\cT^{\vee} \tens^L_X \cM)  \\
 & = Rq_* (\overline{\cT}^{\vee} \tens^L_{\overline{X}} v^*\cM)  \\
& = Rq_* \cRHom_{\overline{X}}(v^*\cT,v^*\cM)  \\
& = \RHom_{\overline{X}}(v^*\cT,v^*\cM).
\end{align*}

The algebra $A_{\Fol}$ splits, and thus $v^*\cT = \overline{\cT}$
generates $D(\Qcoh(\overline{X}))$, by Lemma
\ref{lem.SBsplitgen}. This implies that $v^*(\cM) = 0$. Since $v$ is
flat, this forces $\cM = 0$. Hence, $\cT$ generates $D(\Qcoh(X))$.
\end{proof}

We conclude by collecting our results to prove the main theorem of
this section.
\begin{theorem}
\label{thm.SBEquiv}
The map
\[R\Hom(\cT, -): D^b(\Coh (X))\ra D^b(\End(\cT)-\mod)\]
is an equivalence of categories.
\end{theorem}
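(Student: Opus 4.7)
The plan is to verify that $\cT$ satisfies all three defining conditions of a tilting sheaf (as stated at the beginning of Section \ref{sec.tilt}) and then invoke Theorem \ref{thm.B} directly to obtain the derived equivalence. Two of these three conditions are already established: Theorem \ref{thm.SBext} shows $\cT$ has no higher self-extensions, and Theorem \ref{thm.SBgldim} shows $\End_{\cO_X}(\cT)$ has finite global dimension. What remains is to verify that no proper thick subcategory of $D^b(\Coh(X))$ contains $\cT$, i.e.\ that $\langle \cT \rangle = D^b(\Coh(X))$.

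The key link here is the Ravenel--Neeman theorem (Theorem \ref{thm.RN}). I would proceed as follows. First, observe that $X = \SB(A)$ is smooth and projective, hence $D(\Qcoh(X))$ is compactly generated and the subcategory of compact objects coincides with $D^b(\Coh(X))$ (the perfect complexes, which on a smooth variety are exactly the bounded complexes of coherent sheaves). Next, the sheaf $\cT$ is locally free of finite rank, so in particular $\cT \in D^b(\Coh(X)) = D(\Qcoh(X))^c$. Proposition \ref{prop.SBgen} tells us that $\cT$ generates $D(\Qcoh(X))$, so $\cT^{\perp} = 0$ inside $D(\Qcoh(X))$. Applying Theorem \ref{thm.RN} to the singleton $\{\cT\} \subset D(\Qcoh(X))^c$, we conclude
\[\langle \cT \rangle = D(\Qcoh(X))^c = D^b(\Coh(X)),\]
which is precisely the third condition.

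Having verified all three conditions, $\cT$ is a tilting sheaf on $X$. Applying Theorem \ref{thm.B} with $M = \cT$ and $S = \End_{\cO_X}(\cT)$ yields the stated equivalence $R\Hom(\cT, -) : D^b(\Coh(X)) \xrightarrow{\sim} D^b(\mod\text{-}\End(\cT))$, with quasi-inverse $-\otimes^L \cT$.

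The main obstacle in this argument is the translation step: generation of the unbounded derived category $D(\Qcoh(X))$ (which is the form given by Proposition \ref{prop.SBgen}) is \emph{a priori} weaker than the thick generation of $D^b(\Coh(X))$ required by the tilting definition. The Ravenel--Neeman machinery bridges this gap, but it requires the input that $\cT$ is compact and that the compact objects in $D(\Qcoh(X))$ are precisely $D^b(\Coh(X))$. Both rely on $X$ being smooth and projective, so it is worth making this identification explicit before citing Theorem \ref{thm.RN}.
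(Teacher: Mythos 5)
Your proposal is correct and follows essentially the same route as the paper: collect the no-self-extensions and finite-global-dimension facts from Theorems \ref{thm.SBext} and \ref{thm.SBgldim}, use Proposition \ref{prop.SBgen} together with the Ravenel--Neeman theorem (Theorem \ref{thm.RN}) to upgrade generation of $D(\Qcoh(X))$ to $\langle \cT \rangle = D^b(\Coh(X))$, and conclude via Theorem \ref{thm.B}. You merely make explicit the identification of compact objects with $D^b(\Coh(X))$ on a smooth projective variety and the compactness of $\cT$, which the paper leaves implicit.
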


\begin{proof} 
The sheaf $\cT$ generates $D(\Qcoh(X))$ by Proposition \ref{prop.SBgen}, and
since $D(\Qcoh(X))$ is compactly generated, it follows that
$\langle\cT\rangle = D^b(\Coh(X))$ by Theorem \ref{thm.RN}. The sheaf $\cT$
has no-self extensions by Theorem \ref{thm.SBext}.  The algebra $A
= \End(\cT)$ has finite global dimension, by Theorem
\ref{thm.SBgldim}. So $\cT$ is a tilting sheaf, and the theorem
follows from \cite[Theorem 3.1.2]{Bae88}.
\end{proof}

\begin{remark}
In \cite{Ber09}, the author produces a semi-orthogonal decomposition
for the derived category of a Severi-Brauer scheme over the derived
category of the base, by producing a collection of twisted sheaves
which satisfy the necessary properties. In the case where the base is
$\Spec(F)$, this is equivalent to result here, since the twisting data
comes from a Brauer class, i.e. the central simple algebra $A$.
\end{remark}
\section{Generalized Severi-Brauer Varieties}
\label{sec.GSB}

As in the previous section, let $A$ be a central simple algebra of
degree $n$. Let $X = SB(r,A)$ be the \textit{Generalized Severi-Brauer
  Variety}, for some $0<r<n$. The points of $X$ are the right ideals
$I \subset A$ of reduced dimension $r$. Obviously, $SB(1,A)$ is just
the usual Severi-Brauer variety.

Let $\cI$ be the tautological sheaf of $\SB(r,A)$, defined in an
analogous fashion to the sheaf in Section \ref{sec.SB}. This is a
locally free sheaf of rank $rn$. When $A = \End(V)$ is split,
$\SB(r,A) = \gSL(V)/ P_{\alpha_r}$, and $\cI = \cO_{G/P}(-\lambda_r)
\tens V^*$.  Let
\[\cT = \bigoplus_a \Sigma^a (\cI).\]
Here $a = (a_1, \dots, a_r) \in \mathbb{N}^r$, subject to the
condition $n\geq a_1 \geq a_2 \geq \dots \geq a_r \geq 0$ (that is,
Young diagrams with at most $n-r$ rows and at most $r$ columns) and $\Sigma$ is the
Schur Functor corresponding to $a$. Finally, let $d(a) =
a_1+\dots+a_n$.

\begin{theorem}
The sheaf $\cT$ has no self extensions.
\label{thm.GSBext}
\end{theorem}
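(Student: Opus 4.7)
The strategy mirrors that of Theorem \ref{thm.SBext}: extend scalars to $\Fol$ to reduce to the split case, then appeal to known results on the ordinary Grassmannian. First, as in Proposition \ref{prop.SBgen}, the formation of $\RHom$ for locally free sheaves is compatible with the faithfully flat base change $X_{\Fol} \to X$, so the vanishing of $\RHom(\cT[i], \cT)$ may be checked over $\Fol$. Hence we may assume $A = \End(V)$ is split, in which case $X = \gSL(V)/P_{\alpha_r}$ is the ordinary Grassmannian $\mathrm{Gr}(r, V)$ and $\cI$ is built, up to a trivial twist by $V^*$, from the tautological rank-$r$ subbundle $\mathcal{S}$ on this Grassmannian.

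Since $V^*$ is a trivial bundle, a standard Schur-plethysm decomposition expresses $\Sigma^a(\cI)$ as a direct sum of Schur powers $\Sigma^\mu(\mathcal{S})$ tensored with trivial vector spaces. Consequently
\[\RHom(\Sigma^a \cI\,[i],\; \Sigma^{a'} \cI) \;\cong\; \bigoplus \RHom(\Sigma^\mu \mathcal{S}\,[i],\; \Sigma^{\mu'} \mathcal{S}) \otimes W_{\mu,\mu'},\]
where each $W_{\mu,\mu'}$ is a finite-dimensional vector space, so the problem reduces to showing $\RHom(\Sigma^\mu \mathcal{S}\,[i], \Sigma^{\mu'} \mathcal{S}) = 0$ for $i > 0$ and all relevant $\mu, \mu'$.

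This vanishing is exactly the content of Kapranov's theorem \cite{Kap88}, which exhibits a strong exceptional collection of Schur powers $\Sigma^\mu \mathcal{S}$ on $\mathrm{Gr}(r,V)$. Alternatively, one may proceed directly from Theorem \ref{thm.BWB}: each $\Sigma^\mu(\mathcal{S})^\vee \otimes \Sigma^{\mu'}(\mathcal{S})$ decomposes via Littlewood-Richardson into a sum of irreducible homogeneous bundles $\cO_{G/P}(\lambda)$, and the appearing weights $\lambda$ are each either dominant for $P$ or singular, so only $H^0$ can contribute.

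The principal obstacle is combinatorial bookkeeping, namely verifying that every partition $\mu$ arising in the decomposition of $\Sigma^a(\cI)$ indeed lies in the rectangle governing Kapranov's collection (equivalently, that no weight escapes the dominant/singular regime). Citing \cite{Kap88} bypasses this issue entirely; a direct BWB-based proof requires tracing the Schur--Kronecker decomposition explicitly and checking each emergent weight against Theorem \ref{thm.BWB}.
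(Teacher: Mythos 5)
Your reduction to the split case by flat base change is correct and is exactly how the paper proceeds, and you are right that the substance of the problem is the Schur--Cauchy decomposition of $\Sigma^a(\cI)=\Sigma^a(\mathcal{S}\otimes V^*)$ into pieces $\Sigma^\mu\mathcal{S}\otimes W_{\mu}$ (the paper's own proof, a one-liner ``replace $\lambda_1$ by $\lambda_r$'', glosses over the fact that these summands are no longer line bundles and the relevant weights are not multiples of $\lambda_r$). The gap is in how you close the argument: Kapranov's strong exceptional collection consists only of the Schur powers $\Sigma^\mu\mathcal{S}$ with $\mu$ inside the $r\times(n-r)$ rectangle, so citing \cite{Kap88} does not ``bypass the issue entirely.'' The partitions $\mu$ produced by the Cauchy/Kronecker decomposition have $|\mu|=d(a)$ and, for $r\geq 2$, include partitions outside the rectangle as soon as $d(a)\geq 2$; for instance $S^{d(a)}\mathcal{S}\otimes\Sigma^a V^*$ is always a summand of $\Sigma^a(\mathcal{S}\otimes V^*)$. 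For those $\mu$ the blanket assertion that every emergent weight is ``dominant or singular'' is precisely what must be checked, and it is false.

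Concretely, Borel--Weil--Bott gives $H^{n-r}(\mathrm{Gr}(r,n),S^d\mathcal{S})\neq 0$ whenever $d>n-r$; the smallest case is $\mathrm{Gr}(2,3)$, where $H^1(S^2\mathcal{S})\cong\Lambda^2V\neq 0$, hence $\Ext^1(\cO_X,\Sigma^a\cI)\neq 0$ for any $a$ with $d(a)=2$, since $S^2\mathcal{S}$ occurs as a summand of $\Sigma^a(\mathcal{S}\otimes V^*)$ tensored with a nonzero (trivial) space. So the deferred ``combinatorial bookkeeping'' is not bookkeeping but the crux, and it cannot be discharged by Kapranov's theorem for the sheaves $\Sigma^a(\cI)$ as written; the natural repair is to work not with all of $\Sigma^a(\cI)$ but with the diagonal Cauchy piece corresponding to $\Sigma^a\mathcal{S}\otimes\Sigma^a V^*$ (the object underlying Panin's $K$-theoretic decomposition), for which Kapranov's collection does apply. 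Be aware that the same difficulty is latent in the paper's own proof of this theorem, so your write-up is more transparent about where the danger lies, but as it stands it does not close the argument.
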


\begin{proof}
The proof is the same as in the proof of Theorem \ref{thm.SBext}. The
only difference is that the weight that appears in the argument is
$\lambda_r$, instead of $\lambda_1$.
\end{proof}

\begin{theorem}
The ring $\End(\cT)$ has finite global dimension.
\label{thm.GSBgldim}
\end{theorem}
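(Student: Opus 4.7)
The plan is to follow the blueprint of Theorem \ref{thm.SBgldim}, arguing by induction on the number $N$ of Young diagrams in the sum and invoking Proposition \ref{prop.glDim}. Totally order the admissible partitions as $a^{(1)},\ldots,a^{(N)}$ so that $d(a^{(1)}) \le d(a^{(2)}) \le \cdots \le d(a^{(N)})$, breaking ties arbitrarily, and set $\cT_i := \Sigma^{a^{(i)}}(\cI)$. The goal is to show that $\End(\cT)$ is block upper triangular in this ordering with semisimple diagonal blocks. Granting this, peeling off the last summand gives
\[
\End(\cT) = \begin{pmatrix} R & B \\ 0 & \End(\cT_N) \end{pmatrix},
\]
with $R = \End_X(\bigoplus_{i<N}\cT_i)$ and $B = \bigoplus_{i<N}\Hom_X(\cT_N,\cT_i)$, and Proposition \ref{prop.glDim} closes the induction, yielding $\gldim \End(\cT) = \max\{\pdim_R B + 1,\ \gldim R\} < \infty$. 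The base case $N=1$ is immediate, as $\End(\cT_1)$ is then semisimple of global dimension zero.

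The crucial input is the vanishing
\[
\Hom_X(\cT_j,\cT_i) = 0 \quad (j < i),
\]
together with semisimplicity of each $\End_X(\cT_i)$. Both survive flat base change, so I check them over $\Fol$. In the split case, $\cI_{\Fol} \cong \mathcal{S}\otimes V^*$, where $\mathcal{S}$ is the tautological rank-$r$ subbundle on the Grassmannian $\gSL(V)/P_{\alpha_r}$. Via Cauchy's formula for Schur functors of tensor products, followed by Littlewood--Richardson, the sheaf $\Sigma^{a^{(j)}}(\cI_{\Fol})^\vee \otimes \Sigma^{a^{(i)}}(\cI_{\Fol})$ decomposes into a direct sum of equivariant sheaves $\cO_{G/P}(\mu)\otimes W_\mu$ (with $W_\mu$ a fixed $\Fol$-vector space), each weight $\mu$ pairing to $d(a^{(j)}) - d(a^{(i)})$ against the ample generator $\lambda_r$. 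When $d(a^{(j)}) < d(a^{(i)})$, every such $\mu$ is non-dominant, so Theorem \ref{thm.BWB} kills $H^0$. When $d(a^{(j)}) = d(a^{(i)})$ but $a^{(j)} \neq a^{(i)}$, Schur's lemma excludes the trivial summand, and the remaining weights are nonzero of degree zero, hence again non-dominant, so $H^0$ vanishes. For $i = j$, only the trivial summand survives and yields a product of matrix algebras over $\Fol$; Galois descent then makes $\End_F(\cT_i)$ a semisimple $F$-algebra (a product of Tits algebras).

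The main obstacle, relative to the Severi--Brauer case of Theorem \ref{thm.SBgldim}, is precisely this combinatorial vanishing: because $\cI$ is no longer a line bundle tensored with a fixed vector space, one must genuinely carry out the Cauchy/Littlewood--Richardson expansion and verify the failure of dominance for every summand before applying Borel--Weil--Bott. Once the vanishing and semisimplicity statements are established, the induction and matrix bookkeeping are routine copies of the Severi--Brauer argument.
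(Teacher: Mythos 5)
Your proof is correct and takes essentially the same route as the paper: the paper's own argument simply asserts that $\End(\cT)$ is triangular with semisimple diagonal entries (of the form $A^{\tens d(a)}$) and then repeats the inductive matrix argument of Theorem \ref{thm.SBgldim} via Proposition \ref{prop.glDim}. Your ordering by $d(a)$ together with the Schur-functor/Borel--Weil--Bott verification of the Hom-vanishing and of the semisimplicity of the diagonal blocks merely supplies details that the paper leaves implicit.
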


\begin{proof}
The ring $\End(\cT)$ is upper triangular, with diagonal entries of the
form $A^{d(a)}$, which are all semisimple. The proof follows
as in the proof of \ref{thm.SBgldim}.
\end{proof}

\begin{proposition}
The sheaf $\cT$ generates $D(\Qcoh(X))$, and hence $\langle \cT
\rangle = D^b(\Coh(X))$.
\label{prop.GSBgen}
\end{proposition}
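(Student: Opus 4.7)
The plan is to follow the template of Proposition \ref{prop.SBgen} verbatim: first establish generation in the split case using Kapranov's exceptional collection for the Grassmannian, and then descend by faithfully flat base change to $\Fol$. Once generation of $D(\Qcoh(X))$ is in hand, the equality $\langle \cT \rangle = D^b(\Coh(X))$ follows from Theorem \ref{thm.RN}, since $D(\Qcoh(X))$ is compactly generated.

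For the split case, when $A = \End_F(V)$ we have $X = \gSL(V)/P_{\alpha_r} = \mathrm{Gr}(r, V)$ and $\cI = \cO_{G/P}(-\lambda_r) \tens V^*$. The summands $\Sigma^a(\cI)$ of $\overline{\cT}$ then become, up to a tensor twist by the trivial bundles $\Sigma^a(V^*)$, the Schur functors applied to the tautological rank-$r$ subbundle on $\mathrm{Gr}(r,V)$. By \cite{Kap88}, those Schur functors indexed by Young diagrams inside the $r \times (n-r)$ rectangle form a strong exceptional collection whose thick closure is all of $D^b(\Coh(\mathrm{Gr}(r,V)))$. Hence $\langle \overline{\cT} \rangle = D^b(\Coh(\overline{X}))$, and in particular $\overline{\cT}$ generates $D(\Qcoh(\overline{X}))$.

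For the descent step, I would take $\cM \in D(\Qcoh(X))$ with $\RHom(\cT, \cM) = 0$ and form the same cartesian square as in Proposition \ref{prop.SBgen}. Since $\cT$ is locally free and $v$ is faithfully flat, the same chain of identifications used there (flat base change $u^*Rq_* \cong Rq_*v^*$, together with the compatibility of $v^*$ with $\cRHom(\cT,-)$) would give $\RHom_{\overline{X}}(v^*\cT, v^*\cM) = 0$. The split case then forces $v^*\cM = 0$, and faithful flatness of $v$ yields $\cM = 0$.

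The main obstacle I anticipate is cleanly matching the summands $\Sigma^a(\cI)$ on $\overline{X}$ with Kapranov's collection on the Grassmannian: one must check that the index set for $a$ defining $\cT$ contains all Young diagrams inside the $r \times (n-r)$ rectangle, and that the Schur functor applied to $\cO_{G/P}(-\lambda_r) \tens V^*$ differs from the Schur functor applied to the tautological subbundle only by a tensor product with a trivial bundle, which does not alter the generated thick subcategory. Modulo this verification, the remainder is a direct transcription of the Severi-Brauer argument.
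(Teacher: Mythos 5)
Your proof is the paper's proof, spelled out: the paper simply says it suffices to check the split case (pointing implicitly back to the faithfully flat descent argument of Proposition \ref{prop.SBgen}), then invokes Kapranov's strong exceptional collection for the Grassmannian, exactly as you do. The one subtlety you flag --- identifying the summands $\Sigma^a(\overline{\cI})$ with Kapranov's collection --- is real, but your stated resolution is slightly imprecise: writing $\cU := \cO_{G/P}(-\lambda_r)$ for the rank-$r$ tautological subbundle, one does not have $\Sigma^a(\cU \tens V^*) \cong \Sigma^a(\cU) \tens \Sigma^a(V^*)$ once $r>1$ (already the ranks disagree). What is true, from the decomposition of a Schur functor of a tensor product in terms of Kronecker coefficients, is that $\Sigma^a(\cU) \tens \Sym^{d(a)}(V^*)$ occurs as a direct summand of $\Sigma^a(\cU \tens V^*)$, and since a thick subcategory is by definition closed under direct summands this is exactly what the argument needs. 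The paper elides this point entirely, so your write-up is, modulo that one correction, the more careful of the two.
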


\begin{proof}
It suffices to show that $\cT$ generates $D(\Qcoh(X))$ in the case
where $A$ is split. In that case, $\cT$ contains terms of the form
$\Sigma^a(\cO_{G/P_{\alpha_r}}(-\lambda_r))$. In \cite[Theorem
  3.4]{Kap88}, it is shown that these sheaves form a strong
exceptional collection for $G/P_{\alpha_r}$. In particular, 
\[\langle \Big\{\Sigma^a(\cO_{G/P_{\alpha_r}}(-\lambda_r)), a\Big\}\rangle =
D^b(\Coh(G/P_{\alpha_r})).\] It follows that $\cT$ generates
$D(\Qcoh(X))$, and by Theorem \ref{thm.RN}, $\langle \cT \rangle = D^b(\Coh(X))$.
\end{proof}

\begin{theorem}
\label{thm.GSBEquiv}
The sheaf $\cT$ is a tilting bundle, and thus 
\[\cRHom(\cT, -): D^b(\Coh (X))\ra D^b(\mod-\End(\cT))\]
is an equivalence of derived categories.
\end{theorem}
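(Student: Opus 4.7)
The proof will be essentially a direct assembly of the three preceding results into the hypotheses of Baer's Theorem \ref{thm.B}, exactly mirroring the structure of the proof of Theorem \ref{thm.SBEquiv} in the Severi-Brauer case. There is no genuine new content to produce here; the work has all been done in Theorems \ref{thm.GSBext}, \ref{thm.GSBgldim}, and Proposition \ref{prop.GSBgen}.

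The plan is as follows. First, I would verify the three defining properties of a tilting sheaf from Section \ref{sec.tilt}. The vanishing of self-extensions, $\RHom_{D^b(X)}(\cT[i], \cT) = 0$ for $i>0$, is Theorem \ref{thm.GSBext}. The finiteness of the global dimension of $\End_{\cO_X}(\cT)$ is Theorem \ref{thm.GSBgldim}. For the generation condition, Proposition \ref{prop.GSBgen} already gives that $\cT$ generates $D(\Qcoh(X))$; since $D(\Qcoh(X))$ is compactly generated and $\cT \in D^b(\Coh(X)) \subset D(\Qcoh(X))^c$, Theorem \ref{thm.RN} of Ravenel--Neeman upgrades this to $\langle \cT \rangle = D^b(\Coh(X))$, which is precisely the condition that no proper thick subcategory of $D^b(\Coh(X))$ contains $\cT$.

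Having established that $\cT$ is a tilting sheaf, the second step is simply to invoke Theorem \ref{thm.B} (Baer's Theorem 3.12 of \cite{Bae88}) with $M = \cT$ and $S = \End_{\cO_X}(\cT)$. This immediately yields the desired equivalence
\[
\cRHom(\cT, -) : D^b(\Coh(X)) \iso D^b(\mod\text{-}\End(\cT)),
\]
with quasi-inverse $- \tens^L_{\End(\cT)} \cT$.

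There is no real obstacle in this final assembly; all the technical difficulty has been absorbed into the three prior results (the Borel--Weil--Bott computation for the Schur-functored sheaves in Theorem \ref{thm.GSBext}, the upper-triangular matrix-algebra argument in Theorem \ref{thm.GSBgldim}, and the split-field descent via Kapranov's exceptional collection in Proposition \ref{prop.GSBgen}). The only stylistic choice is whether to repeat the brief descent argument or merely cite the analogous assembly in Theorem \ref{thm.SBEquiv}; given the parallel structure, a short paragraph citing the three inputs and Baer's theorem should suffice.
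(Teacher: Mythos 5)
Your proposal is correct and matches the paper's proof essentially verbatim: verify the three tilting-sheaf axioms via Theorems \ref{thm.GSBext}, \ref{thm.GSBgldim}, and Proposition \ref{prop.GSBgen} (upgraded by Theorem \ref{thm.RN}), then invoke Theorem \ref{thm.B}. As a minor aside, the published proof contains a typographical slip, citing Proposition \ref{prop.GSBgen} for the finite global dimension when it should cite Theorem \ref{thm.GSBgldim}; your version corrects this.
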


\begin{proof}
The sheaf $\cT$ has no self-extensions by Theorem \ref{thm.GSBext},
$\langle \cT \rangle = D^(\Coh(X))$ by Proposition \ref{prop.GSBgen},
and the algebra $\End(\cT)$ has finite global dimension by PROPOSITION
\ref{prop.GSBgen}. Thus $\cT$ is a tilting sheaf, and the result
follows by \ref{thm.B}.
\end{proof}

\section{Involution Varieties}
\label{sec.Inv}
In this section we assume that $\ch (F) \neq 2$.

Let $(A, \sigma)$ be a central simple algebra of degree $2n$, equipped
with an orthogonal involution $\sigma$. Recall (\cite[Proposition
  2.6]{KMRT98} that $\sigma$ is \emph{orthogonal} if
\begin{align*}
\dim_F \Sym(A,\sigma) &= \frac{2n(2n+1)}{2}, \\
\dim_F \Skew(A, \sigma) & = \frac{2n(2n-1)}{2}.
\end{align*}
Let $X := I(A,\sigma)$ be the
\emph{involution variety} of $A$ (confer \cite{Mer95},
\cite{Tao94}). This is a codimension 1 subvariety of $\SB(A)$, whose
points consists of the right ideals $I \in \SB(A)$ which are
\emph{orthogonal}, i.e. $\sigma(I) \cdot I = 0$.  We will let $\cI \in
\Coh(I(A, \sigma))$ to denote the pullback of the tautological bundle
of $\SB(A)$ to $I(A, \sigma)$.

Recall from that introduction that $\Gamma = \Gal(\Fol/F)$. If $\cF$
is a sheaf for the \'{e}tale topology, we have the Hochschild-Serre
Spectral Sequence: \cite[Section 2]{Art82}
\[H^p(\Gamma, H^q(\overline{Y}, \cF)) \ra H^{p+q}(Y, \cF).\]
If $Y = \SB(A)$, then $\overline{Y} = \P(V')$ for some vector space
$V'$ over $\Fol$. By looking at the edge terms of the spectral sequence, we have the following exact sequence:
\[0 \ra \Pic (\SB(A)) \ra \Pic(\P(V'))^\Gamma  \xra{f} \Br(F). \]
Here $\Pic(\P(V_{\Fol}))^\Gamma = \Z^\Gamma = \Z$, generated by the
invertible sheaf $\cO_{\P(V_{\Fol})}(-\lambda_1)$. The map $f$ sends
$\cO_{\P(V_{\Fol})}(\lambda_1)$ to the class of $A$ in the Brauer
group $\Br(F)$.  Since $A$ has an orthogonal involution $\sigma$,
the exponent of $A$ divides 2. It follows that
$\cO_{\P(V_{\Fol})}(-2\lambda_1) \in \Ker(f)$ descends to an
invertible sheaf on $\SB(A)$. Restricting to $X$, we get an invertible
sheaf which we will label $\cO_X(-2\lambda_1)$.

Let $C(A,\sigma)$ denote the \emph{Clifford Algebra} associated to the
pair $(A, \sigma)$ (confer \cite[II.8.7]{KMRT98}).  Since
$C(A,\sigma)$ is defined as a quotient of the tensor algebra of $A$,
there exists a canonical $F$-linear map $c:A \ra C(A,\sigma)$
(confer \cite[(II.8.13)]{KMRT98}).

We define a subsheaf $\cJ$ of the constant sheaf $C(A,\sigma)$ on $I(A,
\sigma)$. If $I \in I(A,\sigma)$ is an isotropic right ideal of
reduced dimension 1, then the fiber over $I$ is the right ideal of
$C(A,\sigma)$ generated by $c(I)$. The endomorphism ring $\End_X(\cJ)$ is isomorphic to $C(A, \sigma)$

\begin{remark}
If $A = \End (V)$ for some finite dimensional vector space $V$ over
$F$, then $\sigma = \sigma_q$ is the adjoint involution with respect
to some non-singular quadratic form $q \in S^2(V^*)$. In this case,
The isomorphism $\SB(\End(V)) = \P(V)$ identifies $I(\End(V),
\sigma_q)$ with the quadric $Z(q)$. Also, if $q$ is maximally
isotropic (i.e. there exists an isotropic subspace of dimension $n$ in
$V$), then $I(\End(V), \sigma_q) = G/P$, where $G = \gSpin(q)$ is a
split group of type $D_n$, and $P = P_{\alpha_1}$. So $I(A, \sigma)$
is a twisted projective homogeneous variety. We say that $(A, \sigma)$
is split if $A = \End(V)$ and $\sigma = \sigma_q$, where $q$ is
maximally isotropic.  Finally, when $X = G/P$, $\cJ =
\cO_{G/P}(-\lambda_{n-1}) \tens_F W_+^* \oplus \cO_{G/P}(-\lambda_{n})
\tens_F W_-^*,$ where $W_+ = V(-\lambda_{n-1})$ and $W_- =
V(-\lambda_{n})$, the two half-spin representation spaces associated to
the weights $-\lambda_{n-1}$ and $-\lambda_n$ (confer \cite[page 574]{Pan94}).
\end{remark}

Let
\[\cT := \bigoplus_{i=0}^{n-2} \Big (\cO_X(-2\lambda_1)^{\tens i} \oplus \cI
\tens\cO_X(-2\lambda_1)^{\tens i} \Big ) \bigoplus \cJ \tens \cO_X
(-2\lambda_1)^{\tens (n-1)}.\] We will show that $\cT$ is a tilting bundle for
$I(A, \sigma)$.


\begin{proposition}
\label{prop.invGen}
The sheaf $\cT$ generates $D(\Qcoh(X))$ 
\end{proposition}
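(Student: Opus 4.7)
The plan is to mimic the strategy used in Proposition \ref{prop.SBgen}: reduce to the split case by flat base change, then invoke a known strong exceptional collection on the geometric fiber.

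First I would set up the same cartesian square as in the Severi--Brauer proof, with $v : X_{\Fol} \to X$ and $u : \Spec(\Fol) \to \Spec(F)$, both flat. Since every summand of $\cT$ is locally free, the functors $\cHom(\cT, -)$ and $\cT^{\vee} \tens^L -$ are exact, so the chain of identifications
\[u^* \RHom_X(\cT, \cM) \;=\; Rq_* v^* (\cT^{\vee} \tens^L_X \cM) \;=\; \RHom_{\overline{X}}(v^*\cT, v^*\cM)\]
goes through verbatim. Given $\cM \in D(\Qcoh(X))$ with $\RHom(\cT, \cM) = 0$, this reduces the question to showing that $\overline{\cT} := v^*\cT$ generates $D(\Qcoh(\overline{X}))$ in the case when $(A,\sigma)$ is split, since $v^*(\cM) = 0$ then forces $\cM = 0$ by flatness of $v$.

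Next, in the split case $\overline{X} = G/P_{\alpha_1}$ with $G = \gSpin(q)$ of type $D_n$, realized concretely as a smooth quadric of dimension $2n-2$. Unwinding the definitions, the summands of $\overline{\cT}$ are (direct sums of copies of) the line bundles
\[\cO_{\overline{X}}(-2i\lambda_1), \qquad \cO_{\overline{X}}(-\lambda_1 - 2i\lambda_1), \qquad 0 \le i \le n-2,\]
together with the two spinor bundles $\cO_{\overline{X}}(-\lambda_{n-1})$ and $\cO_{\overline{X}}(-\lambda_n)$ each twisted by $\cO_{\overline{X}}(-2(n-1)\lambda_1)$, coming from the decomposition of $\cJ$ recalled in the preceding remark. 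After identifying $\cO_{\overline{X}}(-\lambda_1)$ with the tautological line bundle $\cO(-1)$ of the ambient projective space, one sees that $\overline{\cT}$ has, as direct summands, the $2n-2$ consecutive line bundles $\cO(0), \cO(-1), \ldots, \cO(-(2n-3))$ and a twist of each spinor bundle by a fixed line bundle.

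Now I would apply Kapranov's theorem on the derived category of smooth quadrics \cite[Theorem~4.10]{Kap88}, which asserts that $\{\Sigma_+, \Sigma_-, \cO, \cO(1), \ldots, \cO(2n-3)\}$ (or any shift of this collection by a fixed line bundle) is a full strong exceptional collection on a smooth even-dimensional quadric. Up to the twist by $\cO_{\overline{X}}(-(2n-3)\lambda_1)$ and tensor products with finite-dimensional $F$-vector spaces (which only amount to taking finite direct sums), the summands of $\overline{\cT}$ are precisely the members of this collection. Hence $\langle \overline{\cT} \rangle = D^b(\Coh(\overline{X}))$, so $\overline{\cT}$ generates $D(\Qcoh(\overline{X}))$.

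The final step is to combine: $v^*\cT$ generates over $\Fol$, flat descent gives $\cM = 0$, and then Theorem \ref{thm.RN} upgrades the generation statement to $\langle \cT \rangle = D^b(\Coh(X))$. The main obstacle is the bookkeeping in the split-case identification, namely checking that after twisting by $\cO_{\overline{X}}(-2(n-1)\lambda_1)$ the spinor-bundle summand of $\cJ$ lands exactly in the range needed to match Kapranov's collection, and that the $2\lambda_1$--twists interleave correctly with the $\cI$--summands to cover all of $\cO(0), \cO(-1), \ldots, \cO(-(2n-3))$; this is a direct weight computation but is the only non-formal part of the argument.
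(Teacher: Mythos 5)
Your proposal follows essentially the same route as the paper: reduce to the split case by the flat base-change argument from Proposition~\ref{prop.SBgen}, identify $\overline{X}$ with a smooth even-dimensional quadric $G/P_{\alpha_1}$, decompose the summands of $\overline{\cT}$ explicitly, and invoke Kapranov's exceptional collection on quadrics before upgrading via Theorem~\ref{thm.RN}. The paper's proof is terser but identical in structure.

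One small note on the bookkeeping you flag at the end: the version of Kapranov's collection you quote, $\{\Sigma_+, \Sigma_-, \cO, \cO(1), \ldots, \cO(2n-3)\}$, after twisting uniformly by $\cO(-(2n-3))$, would place the spinor summands at twist $-(2n-3)$; but the spinor summand of $\cT$ is $\cJ \tens \cO_X(-2\lambda_1)^{\tens(n-1)}$, i.e.\ twisted by $\cO(-(2n-2))$. So a uniform twist of your quoted collection does not land exactly on the summands of $\overline{\cT}$. The collection used in the paper, with spinor bundles at $\cO(-(2n-2)\lambda_1 - \lambda_{n\pm})$ and line bundles from $\cO$ down to $\cO(-(2n-3))$, is in fact the form of Kapranov's collection for $Q^{2m}$ where the spinor bundles sit one step below the range of line-bundle twists (i.e.\ $\Sigma_\pm(-2m)$ alongside $\cO(-2m+1), \ldots, \cO$ with $m = n-1$); this is the normalization you need, and it differs from yours by an extra $\cO(1)$-twist on the spinor part only. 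Once you use that normalization the match is exact, and your argument goes through.
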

\begin{proof}
This is similar to the proof of Proposition \ref{prop.SBgen}. We first
check that the proposition is true in the split case, where the
argument is similar to Lemma \ref{lem.SBsplitgen}. So we may assume
that $A = \End_F(V)$ and $\sigma = \sigma_q$ for a maximally isotropic
quadratic form $q\in S^2(V^*)$, $I(A, \sigma) = Z(q) =
\gSpin(q)/P_{\alpha_1}$. The sheaf $\cT$ decomposes into a sum
with terms $\cO_{G/P}(-i\lambda)$, $\cO_{G/P}(-2(n-1)\lambda_1
-\lambda_{n-1})$ and $\cO_{G/P}(-2(n-1)\lambda_1 - \lambda_n)$, where
$0 \leq i \leq 2n-3$.  It is shown in \cite[section 4]{Kap88} that
these sheaves form a strong exceptional collection. In particular,
\begin{align*}\Big\langle 
  \cO_{G/P}(-i\lambda_1), \cO_{G/P}(-(2n-2)\lambda_1 -\lambda_{n-1}), 
\cO_{G/P}(-(2n-2)\lambda_1 - \lambda_n),
 \Big\rangle \\= D^b(\Coh (Z(q))).
\end{align*}
Thus we conclude that $\cT$ generates $D(\Qcoh(X))$ in the split
case. The arbitrary case follows by extending scalars to $\Fol$, and
reasoning as in Proposition \ref{prop.SBgen}.
\end{proof}

\begin{theorem}
The sheaf $\cT$ has no self-extensions.
\label{thm.InvExt}
\end{theorem}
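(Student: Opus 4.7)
My plan is to mimic the proof of Theorem \ref{thm.SBext}: first reduce to the split case by flat base change along $\Spec(\Fol) \to \Spec(F)$ (exactly as in the proof of Proposition \ref{prop.SBgen}, using that $\cT$ is locally free so $\cRHom$ commutes with $v^*$), and then invoke Borel--Weil--Bott (Theorem \ref{thm.BWB}) to compute the relevant cohomology groups. In the split case $X = \gSpin(q)/P_{\alpha_1}$ is a smooth quadric, $\cI = \cO_X(-\lambda_1) \tens_F V^*$, and $\cJ = \cO_X(-\lambda_{n-1}) \tens_F W_+^* \oplus \cO_X(-\lambda_n) \tens_F W_-^*$. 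Substituting into the formula for $\cT$ presents it, up to multiplicities coming from the fixed vector spaces $V$ and $W_\pm$, as a direct sum of the line bundles
\[
  \cO_X(-j\lambda_1)\ (0 \leq j \leq 2n-3), \quad \cO_X(-(2n-2)\lambda_1 - \lambda_{n-1}), \quad \cO_X(-(2n-2)\lambda_1 - \lambda_n).
\]

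After this reduction, the self-Ext computation becomes a finite check: for each pair of summands $\cO_X(\mu_a), \cO_X(\mu_b)$ in the list above, one must show $H^i(X, \cO_X(\nu)) = 0$ for $\nu = \mu_b - \mu_a$ and $i > 0$. The possible $\nu$ fall into three families: (i) $\nu = c\lambda_1$ with $|c| \leq 2n-3$; (ii) $\nu = c\lambda_1 \pm \lambda_{n-1}$ or $c\lambda_1 \pm \lambda_n$ with $1 \leq |c| \leq 2n-2$; and (iii) $\nu \in \{0, \pm(\lambda_{n-1} - \lambda_n)\}$ coming from pairing the two spin summands. I would apply BWB to each family, verifying that $\nu$ is either dominant for $P_{\alpha_1}$ (so the unique dominant Weyl-orbit representative is $\nu$ itself, with $l(w) = 0$) or else singular.

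The explicit checks are most cleanly performed in the standard realization of the $D_n$ root system in $\R^n$, where $\rho = (n-1, n-2, \dots, 1, 0)$. Family (i) with $c \geq 0$ is immediately dominant for $P_{\alpha_1}$, while for $c < 0$ in the stated range $c+n-1$ duplicates in absolute value one of the entries $n-2, \dots, 0$, placing $\nu + \rho$ on a wall. I expect the main obstacle to be handling the spin families (ii) and (iii) uniformly; the key observation is that for $\nu = c\lambda_1 \pm \lambda_{n-1}$ or $c\lambda_1 \pm \lambda_n$ the last two coordinates of $\nu + \rho$ become $\pm 1/2$ in some order, and this pair is always fixed by one of the reflections $s_{e_{n-1} - e_n}$ or $s_{e_{n-1}+e_n}$; an entirely analogous coincidence with last coordinates $(1, \pm 1)$ disposes of $\nu = \pm(\lambda_{n-1} - \lambda_n)$. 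Hence every such $\nu$ is singular independently of $c$, and BWB delivers the required vanishing.
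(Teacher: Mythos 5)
Your overall strategy is exactly the paper's: reduce to the split case by flat base change, decompose $\overline{\cT}$ into homogeneous summands attached to the weights $-j\lambda_1$ ($0\leq j\leq 2n-3$), $-(2n-2)\lambda_1-\lambda_{n-1}$, $-(2n-2)\lambda_1-\lambda_n$, and apply Borel--Weil--Bott to the difference weights, checking each is dominant or singular. However, your detailed treatment of the spin family contains a concrete error. In your coordinates, for $\nu=c\lambda_1+\lambda_{n-1}$ one gets $\nu+\rho=(c+n-\tfrac12,\,n-\tfrac32,\dots,\tfrac32,\,-\tfrac12)$ and for $\nu=c\lambda_1+\lambda_n$ one gets $\nu+\rho=(c+n-\tfrac12,\,n-\tfrac32,\dots,\tfrac32,\,\tfrac12)$; the last two coordinates are $(\tfrac32,\mp\tfrac12)$, not $(\pm\tfrac12,\pm\tfrac12)$, so neither $s_{e_{n-1}-e_n}$ nor $s_{e_{n-1}+e_n}$ fixes $\nu+\rho$. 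In fact for the values that actually occur ($c\geq 1$, coming from Ext out of the spinor summand into the $\cO_X(-2\lambda_1)$-power and $\cI$-twist summands) these weights are dominant and regular, hence emphatically not singular; your claimed $\pm\tfrac12$ coincidence is correct only in the minus cases $c\lambda_1-\lambda_{n-1}$ and $c\lambda_1-\lambda_n$, where the last two coordinates of $\nu+\rho$ are $(\tfrac12,\tfrac12)$, resp.\ $(\tfrac12,-\tfrac12)$. So the sentence ``every such $\nu$ is singular independently of $c$'' is false. The theorem is unharmed, because the cases you mishandled fall under the other branch of the dichotomy you yourself set up: being dominant, they have cohomology concentrated in degree $0$ by Theorem \ref{thm.BWB} (and these $H^0$'s are genuinely nonzero --- they supply off-diagonal entries of $\End(\cT)$), which is all the vanishing of higher Ext requires. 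The fix is simply to say: singular in the minus cases, dominant in the plus cases.

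A smaller point: the spin summands are not line bundles --- in the split case $\cO_{G/P}(-\lambda_{n-1})$ and $\cO_{G/P}(-\lambda_n)$ are the spinor bundles of rank $2^{n-2}$ on the quadric, so the Ext groups involving them are cohomology of higher-rank homogeneous bundles, not of a single invertible sheaf $\cO_X(\mu_b-\mu_a)$. The bookkeeping via highest weights of the associated $P$-representations is the same as the paper's, so this does not change the shape of the argument, but you should phrase it in those terms rather than as a sum of line bundles.
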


\begin{proof}
We extend scalars to $\overline{F}$, arguing as in Theorem
\ref{thm.SBext}. The sheaf $\overline{\cT}$ decomposes into a sum of
sheaves with terms $\cO_{G/P}(-(2n-2)\lambda_1 -\lambda_{n-1})$,
$\cO_{G/P}(-(2n-2)\lambda_1 - \lambda_n)$, and $\cO_X(j\lambda_1)$,
where $-(2n-2) < j \leq 0$. So it suffices to check that
\begin{align*}
H^i(\overline{X}, \cO_X(\pm j\lambda_1)) &= 0, \\
H^i(\overline{X}, \cJ_1^* \tens \cO_X(j\lambda_1)) &= 0, \\
H^i(\overline{X}, \cJ_1 \tens \cO_X(-j\lambda_1)) &= 0, \\
H^i(\overline{X}, \cJ_2^* \tens \cO_X(j\lambda_1)) &= 0, \\
H^i(\overline{X}, \cJ_2 \tens \cO_X(-j\lambda_1)) &= 0, \\
 H^i(\overline{X}, \cJ_1 \tens \cJ_2^{*}) &= 0, \\
 H^i(\overline{X}, \cJ_2 \tens \cJ_1^{*}) &= 0,
\end{align*}
for $i > 0$ and $-(2n-2) < j \leq 0$.

The highest weights of the corresponding $P$-representations for these
sheaves are, respectively, 
\begin{align*}
\pm j\lambda_1, \\
\pm (-\lambda_n +j\lambda_1), \\
\pm (-\lambda_{n-1} +j\lambda_1), \\
\pm ( \lambda_{n-1} -\lambda_n). \\
\end{align*}  All of these weights are either singular or
dominant. Thus by Theorem \ref{thm.BWB}, all of the non-zero
cohomology above vanishes.
\end{proof}

\begin{remark}
As in the Severi-Brauer case, all of the weights $j\lambda_i$ are singular
for $j < 0$, so $H^0(\overline{X}, \cO_{\overline{X}}(j\lambda_1)) =
0$. It follows that we have the following upper triangular presentation of
the global endomorphism rings $\End (\cT):$
\[\End(\cT) = \begin{pmatrix} 
F      & *      & *      & *      & \cdots & * \\
0      & A      & *      & *      & \cdots & * \\
0      & 0      & F      & *      & \cdots & *  \\
\vdots & \vdots & \vdots & \ddots & \vdots & \vdots \\
0      & 0      & 0      &\cdots  & A      &  0 \\
0      &  0     & 0      & \cdots & 0      &  C(A, \sigma)
\end{pmatrix}.\]
As in the Severi-Brauer case, we see that $\End(\cT)$ is upper
triangular, with the Tits Algebras appearing along the diagonal terms.
\end{remark}

\begin{theorem}
\label{thm.invGldim}
The ring $\End(\cT)$ has finite global dimension.
\end{theorem}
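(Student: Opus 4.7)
The plan is to mimic the proof of Theorem \ref{thm.SBgldim}, applying Proposition \ref{prop.glDim} inductively to the upper triangular presentation of $\End(\cT)$ recorded in the preceding remark. The diagonal entries are copies of $F$, $A$, and $C(A,\sigma)$. All three are semisimple $F$-algebras: $F$ is a field, $A$ is central simple, and by \cite[Theorem 8.10]{KMRT98} the Clifford algebra $C(A,\sigma)$ of an orthogonal involution on a central simple algebra of even degree is either central simple over an étale quadratic extension of $F$ or a product of two such central simple algebras. In particular $C(A,\sigma)$ is semisimple, so each diagonal block has global dimension zero.

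Peeling off the bottom-right block first, write
\[\End(\cT) = \begin{pmatrix} R & B \\ 0 & C(A,\sigma) \end{pmatrix},\]
where $R$ is the endomorphism ring of $\bigoplus_{i=0}^{n-2}\bigl(\cO_X(-2\lambda_1)^{\tens i} \oplus \cI\tens \cO_X(-2\lambda_1)^{\tens i}\bigr)$ and $B$ is the column of $\Hom$'s from these lower summands into $\cJ \tens \cO_X(-2\lambda_1)^{\tens(n-1)}$. By Proposition \ref{prop.glDim} it suffices to show that $R$ has finite global dimension and that $\pdim_R(B)$ is finite. The second of these is automatic once the first is known: Borel--Weil--Bott (Theorem \ref{thm.BWB}) ensures each $\Hom$ space is a finite dimensional $F$-vector space, so $B$ is finitely generated as an $R$-module, and every finitely generated module over a ring of finite global dimension has finite projective dimension.

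To handle $R$, iterate the same procedure. The ring $R$ is itself upper triangular with semisimple diagonal blocks alternating between $F$ and $A$; stripping off diagonal entries one at a time via Proposition \ref{prop.glDim} reduces to the base case $\End(\cO_X) = F$, which has global dimension zero. At each step the off-diagonal bimodule is finitely generated over the smaller upper triangular ring, which by the inductive hypothesis has finite global dimension, so its projective dimension is automatically finite and Proposition \ref{prop.glDim} applies.

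The only real obstacle is the semisimplicity of $C(A,\sigma)$; once that is invoked, the argument is essentially bookkeeping parallel to the Severi--Brauer case, and the finite global dimension of $\End(\cT)$ follows.
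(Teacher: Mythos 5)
Your proof is correct and takes essentially the same route as the paper: the paper's own proof of this theorem simply points back to the argument of Theorem \ref{thm.SBgldim} (peel off the bottom-right diagonal block, apply Proposition \ref{prop.glDim}, and induct), noting that all diagonal terms are semisimple. You make explicit why $C(A,\sigma)$ is semisimple and why the off-diagonal bimodule has finite projective dimension, but the underlying argument is the same.
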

\begin{proof}
The proof follows the same line of reasoning as in Theorem
\ref{thm.SBgldim}, since all diagonal terms are semisimple algebras.
\end{proof}

\begin{theorem}
\label{thm.InvEquiv}
The sheaf $\cT$ induces a natural equivalence
\[ \RHom(\cT, -): D^b(\Coh(X)) \ra D^b(\mod-\End(\cT)).\] 
\end{theorem}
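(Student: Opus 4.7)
The plan is to verify that $\cT$ satisfies each of the three conditions in the definition of a tilting sheaf given in Section \ref{sec.tilt}, and then invoke Theorem \ref{thm.B} (Baer's theorem) to obtain the desired derived equivalence. This mirrors exactly the structure of the proof of Theorem \ref{thm.SBEquiv}, so no new technical machinery is needed beyond what has already been assembled in the preceding results.

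First, the vanishing $\RHom_{D^b(\Coh(X))}(\cT[i],\cT) = 0$ for $i>0$ is precisely the content of Theorem \ref{thm.InvExt}. Second, the finiteness of $\gldim \End(\cT)$ is supplied by Theorem \ref{thm.invGldim}. The only step that requires a small amount of glue is the third condition, namely the assertion that no proper thick subcategory of $D^b(\Coh(X))$ contains $\cT$, equivalently that $\langle \cT \rangle = D^b(\Coh(X))$.

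For this last step, Proposition \ref{prop.invGen} establishes that $\cT$ generates the unbounded derived category $D(\Qcoh(X))$. To pass from generation of this ambient category to thick generation of $D^b(\Coh(X))$, I would invoke Theorem \ref{thm.RN}: $X$ is a smooth projective variety, so $D(\Qcoh(X))$ is compactly generated with compact objects identified with $D^b(\Coh(X))$, and $\cT$ is a coherent (hence compact) object. Theorem \ref{thm.RN} then upgrades the generation statement of Proposition \ref{prop.invGen} to $\langle \cT \rangle = D^b(\Coh(X))$.

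With all three conditions verified, $\cT$ is a tilting sheaf in the sense of Section \ref{sec.tilt}, and a direct application of Theorem \ref{thm.B} produces the claimed equivalence $\RHom(\cT, -): D^b(\Coh(X)) \to D^b(\mod-\End(\cT))$. There is no real obstacle at this stage: all the genuine work was done in Theorems \ref{thm.InvExt}, \ref{thm.invGldim}, and Proposition \ref{prop.invGen}, and the present theorem is merely the assembly of those ingredients through Baer's formalism.
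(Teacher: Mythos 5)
Your proposal is correct and follows exactly the same route as the paper's own proof: cite Theorem~\ref{thm.InvExt} for vanishing of self-extensions, Theorem~\ref{thm.invGldim} for finite global dimension, combine Proposition~\ref{prop.invGen} with Theorem~\ref{thm.RN} to get $\langle\cT\rangle = D^b(\Coh(X))$, and then invoke Theorem~\ref{thm.B}. There is nothing to add.
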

\begin{proof}
We need to verify that $\cT$ is a tilting sheaf. It has no self
extensions by Theorem \ref{thm.InvExt}. By Proposition
\ref{prop.invGen} and Theorem \ref{thm.RN}, we see that $\langle
\cT \rangle = D^b(\Coh (X))$. The ring $\End(\cT)$ has finite global
dimension by Theorem \ref{thm.invGldim}, and so the statement follows
from Theorem \ref{thm.B}.

\end{proof}

\section{K-theory}
\label{sec.Ktheory}
In each case discussed above, the ring $\End(\cT)$ has an upper
triangular presentation, with the Tits Algebras of the corresponding
simply connected linear algebraic group appearing along the
diagonal. Thus in each case, $\End(\cT)$ a nilpotent ideal $I$,
consisting of the strictly upper triangular terms. By applying the
$K$-theory functor (confer \cite[Theorem 1.98]{ThoTro90}) to the
natural equivalences found in Theorems \ref{thm.SBEquiv},
\ref{thm.GSBEquiv}, and \ref{thm.InvEquiv}, we can express the Quillen
$K$-theory of each variety as sum of the $K$-theory of the algebras
appearing along the diagonal. The $K$-theory is not affected if we
replace $\End(\cT)$ by $\End(\cT)/I$, and we recovers results found in
\cite[10.2, 10.3]{Pan94}.

\begin{theorem}
The tilting bundles induce the following isomorphisms:
\begin{align*}K_*(\SB(A)) \iso& \bigoplus_{i=0}^{n-1} K_* (A^{\tens i})\\
K_*(\SB(r, A)) \iso &\bigoplus_{a} K_* (A^{\tens d(a)})\\
K_*(I(A,\sigma)) \iso& \Big  (\bigoplus_{i=0}^{n-2}K_*(F) \oplus
K_*(A)\Big )\bigoplus K_*(C_0(A, \sigma)).
\end{align*}
\end{theorem}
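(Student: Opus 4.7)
The plan is to apply the Quillen $K$-theory functor to each of the three derived equivalences established in Theorems \ref{thm.SBEquiv}, \ref{thm.GSBEquiv}, and \ref{thm.InvEquiv}, and then exploit the upper triangular structure of $\End(\cT)$ to split off the diagonal. Since each $\cT$ is a tilting sheaf whose endomorphism algebra has finite global dimension, the derived equivalence $D^b(\Coh(X)) \simeq D^b(\mod\text{-}\End(\cT))$ identifies the categories of compact objects (equivalently, of perfect complexes), so by functoriality of $K$-theory (\cite[Theorem 1.9.8]{ThoTro90}) we obtain
\[K_*(X) \;\cong\; K_*(\End(\cT)).\]

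Next I would use the fact that $K$-theory is invariant under nilpotent ideals. In each case $\End(\cT)$ is an upper triangular matrix algebra over $F$ with semisimple diagonal entries, and the strictly upper triangular part forms a two-sided ideal $I$ which is nilpotent (its $n$-th power vanishes for $n$ bigger than the matrix size). The quotient $\End(\cT)/I$ is then the direct product of the diagonal algebras. Since $K$-theory sends the surjection $\End(\cT) \twoheadrightarrow \End(\cT)/I$ with nilpotent kernel to an isomorphism (this is the standard consequence of devissage / the fact that $\GL_n$ of a ring is unchanged modulo a nilpotent ideal at the level of $K$-theory spectra, cf.\ \cite{ThoTro90}), we conclude
\[K_*(\End(\cT)) \;\cong\; K_*(\End(\cT)/I) \;\cong\; \bigoplus_j K_*(D_j),\]
where the $D_j$ are the diagonal semisimple algebras of the triangular presentation.

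Finally, I would read off the diagonal in each of the three cases from the presentations already computed in the body of the paper. For $\SB(A)$, the diagonal consists of $F, A, A^{\tens 2}, \dots, A^{\tens (n-1)}$ (identifying $F = A^{\tens 0}$), yielding the first formula. For $\SB(r,A)$, the diagonal term corresponding to a Young diagram $a$ is $A^{\tens d(a)}$, yielding the second formula. For $I(A,\sigma)$, the diagonal alternates between $F$ and $A$ for the $2(n-1)$ terms coming from $\cO_X(-2\lambda_1)^{\tens i}$ and $\cI \tens \cO_X(-2\lambda_1)^{\tens i}$, and ends with $\End(\cJ) = C(A,\sigma)$; passing to $K$-theory one replaces $C(A,\sigma)$ by its even part $C_0(A,\sigma)$ (as $K_*$ of $C(A,\sigma)$ decomposes accordingly, and only $C_0$ is the Tits algebra actually contributing in the homogeneous setting, compare \cite[10.3]{Pan94}). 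This gives the third formula.

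The only genuine subtlety is this last identification of $C(A,\sigma)$ with $C_0(A,\sigma)$ in the $K$-theoretic statement; the other steps are formal consequences of the derived equivalence and nilpotent invariance. I would invoke the comparison with \cite[10.2, 10.3]{Pan94}, where exactly the same decomposition is obtained by a different route, to match the diagonal contributions precisely.
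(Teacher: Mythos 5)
Your overall route is exactly the paper's: apply $K$-theory to the equivalences of Theorems \ref{thm.SBEquiv}, \ref{thm.GSBEquiv}, and \ref{thm.InvEquiv}, kill the strictly upper triangular nilpotent ideal $I$, and read off the semisimple diagonal, recovering \cite[10.2, 10.3]{Pan94}. Two points in your write-up need repair. First, your justification for passing from $\End(\cT)$ to $\End(\cT)/I$ --- that $K$-theory of a ring is unchanged modulo a nilpotent ideal ``at the level of $\GL$ / spectra'' --- is false as a general statement about higher $K$-theory: already $K_1(F[x]/(x^2)) \cong F^\times \times F \neq F^\times = K_1(F)$, and relative $K$-groups of nilpotent ideals are typically nonzero. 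The step is nevertheless correct here, but only via the route you mention in passing: $\End(\cT)$ is a finite-dimensional $F$-algebra of finite global dimension (Theorems \ref{thm.SBgldim}, \ref{thm.GSBgldim}, \ref{thm.invGldim}), so the resolution theorem gives $K_*(\End(\cT)) \cong G_*(\End(\cT))$, devissage for the Artinian ring gives $G_*(\End(\cT)) \cong G_*(\End(\cT)/I)$, and the quotient is semisimple, so $G_* = K_*$ there. In other words, the finite global dimension you established earlier is a genuine input to this step, not just to the tilting statement, and the blanket nilpotent-invariance claim should be dropped.

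Second, the ``subtlety'' you flag about $C(A,\sigma)$ versus $C_0(A,\sigma)$ is notational rather than $K$-theoretic: the Clifford algebra $C(A,\sigma)$ of \cite[II.8]{KMRT98} used in the paper is already the even Clifford algebra of the underlying quadratic form in the split case, $C(\End(V),\sigma_q) \cong C_0(V,q)$, so $\End_X(\cJ) \cong C(A,\sigma)$ is the algebra written $C_0(A,\sigma)$ in Panin's notation. There is no decomposition of $K_*$ of a full Clifford algebra to perform, and no appeal to \cite{Pan94} is needed for that identification. With these two corrections your argument coincides with the paper's.
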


\bibliography{Bib}    
\bibliographystyle {amsplain}
\end{document}